\def\thm@space@setup{%
  \thm@preskip=0.5em\thm@postskip=\thm@preskip%
}
\newtheoremstyle{named}{}{}{\\itshape}{}{\bfseries}{.}{.5em}{\thmnote{#3's }#1}
\theoremstyle{named}
\theoremstyle{plain}
\newtheorem{thm}{Theorem}[section]
\newtheorem{lem}[thm]{Lemma}
\newtheorem{cor}[thm]{Corollary}
\theoremstyle{definition}
\theoremstyle{remark}
\newtheorem{rmk}[thm]{Remark}
\newcommand{\Hom}{\mathrm{Hom}}
\newcommand{\CC}{\mathbb{C}}
\newcommand{\QQ}{\mathbb{Q}}
\newcommand{\RR}{\mathbb{R}}
\newcommand{\ZZ}{\mathbb{Z}}
\newcommand{\tr}{\mathrm{tr}}
\newcommand{\Gal}{\mathrm{Gal}}
\newcommand{\mc}[1]{\mathcal{#1}}
\newcommand{\mr}[1]{\mathrm{#1}}
\newcommand{\ol}[1]{\overline{#1}}
\newcommand{\wt}[1]{\widetilde{#1}}
\newcommand{\Spec}{\mathrm{Spec}}
\newcommand{\Rep}{\mathrm{Rep}}
\newcommand{\Fet}{\mathrm{F\acute{E}t}}
\DeclareMathOperator{\FIso}{F-Isoc}
\newcommand{\Frob}{\mathrm{Frob}}
\newcommand{\Es}{E(sK_0)}
\DeclareMathOperator{\Aut}{Aut}
\DeclareMathOperator{\Pross}{Pro-ss}
\DeclareMathOperator{\Prored}{Pro-red}
\newcommand{\git}{{\,\!\sslash\!\,}}
\newcommand{\Pired}{\hat{\Pi}_{\lambda}^{\mr{red}}}
\newcommand{\Piplain}{\hat{\Pi}_{\lambda}^{\mr{plain}}}
\newcommand{\Pimot}{\hat{\Pi}_{\lambda}^{\mr{mot}}}
\title{Compatibility of canonical $\ell$-adic local systems on Shimura varieties, II}
\thanks{I thank Christian Klevdal and Jake Huryn for discussions related to this material and for their comments on an earlier version.}
\begin{document}

\author[S.~Patrikis]{Stefan Patrikis}
\address{Department of Mathematics, The Ohio State University\\ 100 Math Tower\\ 231 West 18th Avenue\\ Columbus, OH 43210, USA}
\email{patrikis.1@osu.edu}

\begin{abstract}
Let $(G, X)$ be a Shimura datum. In previous work \cite{klevdal-patrikis:SVcompatiblearXiv} with Klevdal, we showed that the canonical $G(\QQ_{\ell})$-valued local systems on Shimura varieties for $G$ form compatible systems after projection to the adjoint group of $G$. In this note, we strengthen this result to prove compatibility for the $G(\QQ_{\ell})$-local systems themselves. We also include the crystalline compatibility, extending the adjoint case established in our joint work \cite{huryn-kedlaya-klevdal-patrikis} with Huryn, Kedlaya, and Klevdal.
\end{abstract}
\maketitle

\section{Introduction}

Let $(G, X)$ be a Shimura datum, and let $K_0 \subset G(\mathbb{A}_f)$ be a neat compact open subgroup. The associated Shimura variety $\mr{Sh}_{K_0}:= \mr{Sh}_{K_0}(G, X)$ is a smooth quasi-projective variety over the reflex field $E(G, X)$, with underlying complex manifold $\mr{Sh}_{K_0}(G, X)(\CC)= G(\QQ) \backslash (X \times G(\mathbb{A}_f))/K_0$. Varying the compact open subgroup $K \subset K_0$ we have a morphism of varieties over $E(G, X)$, $\mr{Sh}_K \to \mr{Sh}_{K_0}$, and we write $\mr{Sh}= \lim_{K} \mr{Sh}_K$. Fix a basepoint $s \in \mr{Sh}(\CC)$, inducing basepoints in all finite-level Shimura varieties $\mr{Sh}_K$ as well. We then obtain the geometrically connected component $S_{K, s}$ of $\mr{Sh}_{K}$ containing $s$; it is defined over some finite (abelian) extension $E_{K, s}$ of $E(G, X)$. When $Z_G(\QQ) \subset Z_G(\mathbb{A}_f)$ is a discrete subgroup, for any normal compact open subgroup $K \subset K_0$, $\mr{Sh}_K \to \mr{Sh}_{K_0}$ is a Galois cover with Galois group $K_0/K$, and (via the choice of $s$) these assemble to canonical local systems
\[
\pi_1(S_{K_0, s}, s) \to K_0, 
\]
which for varying $\ell$ give us by projection the canonical $\ell$-adic local systems
\[
\rho_{\ell} \colon \pi_1(S_{K_0, s}, s) \to G(\QQ_{\ell}).
\]
A long-standing hope (\cite{deligne:canonicalmodels}) is that $\mr{Sh}_{K_0}$ parametrizes a family of motives; this gives rise to the expectation that the $\rho_{\ell}$ form a compatible system of $\ell$-adic representations, at least in the following sense:
\begin{enumerate}
\item There is an integer $N$ (independent of $\ell$), and an integral model $\mc{S}_{K_0, s}$ over $\mc{O}_{E_{K_0, s}}[1/N]$, such that for all $\ell$, $\rho_{\ell}$ extends to (factors through) an arithmetic local system
\[
\rho_{\ell} \colon \pi_1(\mc{S}_{K_0, s}, s) \to G(\QQ_{\ell}).
\]
\item For all $\ell$, for all closed points $x$ of (the finite-type $\ZZ$-scheme) $\mc{S}_{K_0, s}[1/\ell]$, the semisimple conjugacy class underlying $\rho_{\ell}(\Frob_x)$ is defined over $\ol{\QQ}$ and is independent of $\ell$.
\end{enumerate}
For more refined expectations, see the end of this introduction. Much is known about problems (1) and (2). When $(G, X)$ is of abelian type, these and much more are known by work of Kisin (\cite{kisin:intmodabelian}, \cite{kisin:modpabelian}). When $(G, X)$ is not of abelian type---let us for simplicity at present assume that $G^{\mr{ad}}$ is $\QQ$-simple---then $\mr{rk}_{\RR} (G^{\mr{ad}}_{\RR}) \geq 2$, and our joint work \cite{klevdal-patrikis:SVcompatiblearXiv} with Klevdal proved that (1) and (2) hold after projecting to the adjoint local systems $\rho_{\ell}^{\mr{ad}} \colon \pi_1(S_{K_0, s}, s) \to G^{\mr{ad}}(\QQ_{\ell})$.

The present paper resolves (1) and (2) in general, establishing also some refinements in this generality: $\QQ$-rationality of the independent of $\ell$-conjugacy classes in (2), and crystalline compatibility, which builds on the analogous result in the adjoint case in our joint work \cite{huryn-kedlaya-klevdal-patrikis} with Huryn, Kedlaya, and Klevdal. 
\begin{thm}\label{mainthmintro}
Let $(G, X)$ be a Shimura datum such that $Z_G(\QQ)$ is a discrete subgroup of $Z_G(\mathbb{A}_f)$, let $K_0 \subset G(\mathbb{A}_f)$ be a neat compact open subgroup, and let $s \in \mr{Sh}(\CC)$, $S_{K_0, s}$ be as above. Assume that for all $\QQ$-simple factors $H$ of $G^{\mr{ad}}$, $\mr{rk}_{\RR}(H_{\RR}) \geq 2$. Then there is an integer $N$ and an integral model $\mc{S}:= \mc{S}_{K_0, s}$ over $\mc{O}_{E_{K_0, s}}[1/N]$ such that 
\begin{enumerate}
\item For all $\ell$, $\rho_{\ell}$ factors through 
\[
\pi_1(\mc{S}, s) \to G(\QQ_{\ell}).
\]
\item For all closed points $x \in \mc{S}_{K_0, s}[1/\ell]$, the class of $\rho_{\ell}(\Frob_x)$ in $[G \git G](\ol{\QQ}_{\ell})$ lies in $[G \git G](\QQ)$ and is independent of $\ell$ (not equal to the residue characteristic of $x$).\footnote{In fact, $\rho_{\ell}(\Frob_x)$ is semisimple, by \cite[Theorem 7.1]{bakker-shankar-tsimerman:canonicalmodels}}
\item When $p$ is the residue characteristic of the closed point $x$, which lies in the special fiber $\mc{S}_{\kappa(v)}$ for some $v \vert p$, the local system $\rho_p$ is crystalline, has an associated overconvergent $G$-valued $F$-isocrystal $\rho_{p, v}^{\FIso^{\dagger}}$ on $\mc{S}_{\kappa(v)}$, and the linearized crystalline Frobenius of $\rho_{p, v}^{\FIso^{\dagger}}|_{\kappa(x)}$ defines the same class in $[G \git G](\QQ)$ as $\rho_{\ell}(\Frob_x)$ for $\ell \neq p$.
\end{enumerate}
\end{thm}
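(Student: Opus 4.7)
The plan is to leverage the central isogeny $\iota\colon G \to G^{\mr{ad}} \times G^{\mr{ab}}$, where $G^{\mr{ab}} := G/G^{\mr{der}}$, whose kernel $Z_{G^{\mr{der}}} := Z_G \cap G^{\mr{der}}$ is a finite group scheme of multiplicative type. The projection $\rho_\ell^{\mr{ad}}$ satisfies the analogues of (1), (2), (3) by the prior works \cite{klevdal-patrikis:SVcompatiblearXiv} and \cite{huryn-kedlaya-klevdal-patrikis}. The projection $\rho_\ell^{\mr{ab}}\colon \pi_1(S_{K_0, s}, s) \to G^{\mr{ab}}(\QQ_\ell)$ is governed, via the reciprocity law for the zero-dimensional Shimura variety attached to the torus $G^{\mr{ab}}$, by a family of algebraic Hecke characters of $E_{K_0, s}$; this family is automatically $\ell$-independent and $\QQ$-rational on Frobenius, and is crystalline at $p$ with the crystalline Frobenius matching $\ell$-adic Frobenius by Shimura--Taniyama. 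Hence (1)--(3) hold for the composite $(\rho_\ell^{\mr{ad}}, \rho_\ell^{\mr{ab}})\colon \pi_1(S_{K_0, s}, s) \to (G^{\mr{ad}} \times G^{\mr{ab}})(\QQ_\ell)$, and the task is to lift the compatibility back along $\iota$.

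\textbf{Gluing.} For (1), take the integral model $\mc{S}$ from \cite{klevdal-patrikis:SVcompatiblearXiv} and enlarge its inverting-set $N$ to include the ramification primes of the abelian Hecke characters and to absorb any obstruction (in cohomology with finite $Z_{G^{\mr{der}}}$-coefficients) to lifting the pair $(\rho_\ell^{\mr{ad}}, \rho_\ell^{\mr{ab}})$ along $\iota$; the lift is forced to agree with the given geometric $\rho_\ell$, so one obtains a canonical extension $\rho_\ell\colon \pi_1(\mc{S}, s) \to G(\QQ_\ell)$. For (2), the quotient map $[G \git G] \to [G^{\mr{ad}} \git G^{\mr{ad}}] \times G^{\mr{ab}}$ is finite (because $\iota$ is an isogeny), so the $\ell$-independent, $\QQ$-rational classes of the adjoint and abelian images determine $[\rho_\ell(\Frob_x)] \in [G \git G](\ol{\QQ}_\ell)$ up to a finite ambiguity valued in the character group of $Z_{G^{\mr{der}}}$. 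This ambiguity is eliminated because all $\rho_\ell$ are the $\ell$-components of the single adelic local system $\pi_1(S_{K_0, s}, s) \to K_0 \subset G(\mathbb{A}_f)$, which rigidifies the central lift uniformly. For (3), the analogous pairing combines the adjoint overconvergent $F$-isocrystal of \cite{huryn-kedlaya-klevdal-patrikis} with the overconvergent $F$-isocrystal of the abelian Hecke character along $\iota$ to produce the $G$-valued $\rho_{p, v}^{\FIso^{\dagger}}$, and its linearized crystalline Frobenius matches $\rho_\ell(\Frob_x)$ in $[G \git G](\QQ)$ by combining the adjoint and abelian matchings with the same rigidification.

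\textbf{Main obstacle.} The principal difficulty is precisely the finite $Z_{G^{\mr{der}}}$-gluing: even with full compatibility granted for the adjoint and abelian quotients, lifting along $\iota$ at the level of the arithmetic fundamental group---where Frobenius lives---is a priori a torsor under a finite group of multiplicative type, and one must show that this torsor is not only trivial but that its trivialization is compatible across all $\ell$ and with the crystalline realization. The cleanest tool available is that the $\rho_\ell$ are the $\ell$-projections of the universal homomorphism $\pi_1(S_{K_0, s}, s) \to K_0$, so the central ambiguity is fixed once and for all on the geometric $\pi_1$; the remaining work is to propagate this rigidification to $\pi_1(\mc{S}, s)$ and to the $F$-isocrystal side, which is where canonical-model reciprocity laws for the full group $G$---rather than for $G^{\mr{ad}}$ or $G^{\mr{ab}}$ alone---must be invoked, and where I expect the main new technical input of the proof to lie.
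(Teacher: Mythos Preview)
Your high-level reduction is the same as the paper's: both factor through the isogeny $G \to G^{\mr{ad}} \times G^{\mr{ab}}$, use the known adjoint and abelian compatibilities, and isolate the remaining discrepancy as a character $\chi$ into the finite central group $Z_{G^{\mr{der}}}$. The gap is in how you kill $\chi$.

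For Part (1), the claim that one can ``absorb the obstruction'' by enlarging $N$ is not justified. Knowing that $\rho_\ell^{\mr{ad}}$ and $\rho_\ell^{\mr{ab}}$ are unramified at $w \nmid N\ell$ only tells you $\rho_\ell(I_w) \subset Z_{G^{\mr{der}}}(\QQ_\ell)$; inertia groups certainly admit nontrivial finite quotients, and there is no abstract reason this character is trivial, nor that its ramification set is bounded independently of $\ell$. The paper's mechanism is different: it observes that a single number-field specialization of $\rho_\ell$ (at a special point) already has $\ell$-independent ramification by the reciprocity law, and then shows (via tame specialization and the companion bijection on the finite set $\mc{R}_Z$) that this one good point forces uniform ramification for the entire local system.

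For Parts (2) and (3), your ``rigidification by the adelic local system $\pi_1(S_{K_0,s}) \to K_0$'' is a characteristic-zero statement: the adelic map does not extend to $\pi_1(\mc{S})$, and $\Frob_x$ for $x$ in a special fiber is not in the image of $\pi_1(S_{K_0,s})$. So you have no direct way to evaluate the adelic object at $\Frob_x$. The paper supplies the missing bridge in two steps. First, for $x$ in the $\mu$-ordinary locus, Bakker--Shankar--Tsimerman's CM-lifting theorem produces a special point $\tilde{x}$ in characteristic zero reducing to $x$; the reciprocity computation at $\tilde{x}$ (your Lemma on special points) then gives the $G$-valued compatibility at those $x$. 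Second, to pass from the $\mu$-ordinary locus to all of $\mc{S}_v$, the paper constructs the Drinfeld $\lambda'$-companion $\rho_{\lambda \leadsto \lambda', v}$ of $\rho_{\lambda, v}$, writes $\rho_{\lambda \leadsto \lambda', v} = \rho_{\lambda', v} \cdot \chi$ with $\chi$ valued in $Z_{G^{\mr{der}}}$, and uses a strong-multiplicity-one argument (Rajan's theorem: if $\tr(R\rho) = \tr(R\rho)\cdot R(\chi)$ on a density-one set and $R(\chi) \neq 1$, some component of the monodromy group has identically vanishing trace, which is absurd) to force $\chi = 1$. Your proposal contains neither the CM-lifting input nor this density argument, and without them the finite central ambiguity cannot be resolved.
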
 
We have stated this theorem with the real rank condition on factors of $G^{\mr{ad}}$ to be clear what can be deduced using our previous work \cite{klevdal-patrikis:SVcompatiblearXiv}; but of course, combined with Kisin's work in abelian type, which is needed to handle real rank 1 cases, this yields:
\begin{cor}
Let $(G, X)$ be a Shimura datum such that $Z_G(\QQ)$ is a discrete subgroup of $Z_G(\mathbb{A}_f)$, let $K_0 \subset G(\mathbb{A}_f)$ be a neat compact open subgroup, and let $s \in \mr{Sh}(\CC)$, $S_{K_0, s}$ be as above. Then all the conclusions of Theorem \ref{mainthmintro} hold.
\end{cor}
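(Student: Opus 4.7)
The plan is to reduce the general case to Theorem \ref{mainthmintro} combined with Kisin's theory of integral canonical models for Shimura varieties of abelian type, via a factor-by-factor analysis of $G^{\mr{ad}}$.

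First, I would decompose $G^{\mr{ad}} = \prod_{i \in I} H_i$ into $\QQ$-simple factors, inducing a product decomposition $(G^{\mr{ad}}, X^{\mr{ad}}) = \prod_i (H_i, X_i)$ of adjoint Shimura data. For each index $i$, either $\mr{rk}_{\RR}(H_{i,\RR}) \geq 2$ or $\mr{rk}_{\RR}(H_{i,\RR}) = 1$. In the latter case the irreducible Hermitian symmetric domain $X_i^+$ is of real rank one, so by the classification it is a complex unit ball; hence $H_i$ is an inner form of $\mathrm{PU}(n_i,1)$ and $(H_i, X_i)$ is of abelian type.

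Second, I would use this decomposition to establish the analogues of conclusions (1) and (2) at the adjoint level for $(G, X)$, with no real-rank restriction. The adjoint canonical local system $\rho_\ell^{\mr{ad}}$ factors, after passing to sufficiently small level and restricting to a connected component, as a product of canonical local systems attached to the $(H_i, X_i)$. For the high-rank factors, the adjoint compatibility is supplied by \cite{klevdal-patrikis:SVcompatiblearXiv}; for the rank-one factors, which are of abelian type, the (stronger) $H_i$-valued compatibility is supplied by Kisin \cite{kisin:intmodabelian, kisin:modpabelian}. Assembled together, these yield conclusions (1) and (2) for the adjoint local system $\rho_\ell^{\mr{ad}}$ attached to $(G, X)$.

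Third, I would apply the adjoint-to-$G$ lifting technique developed in the proof of Theorem \ref{mainthmintro}. Once adjoint compatibility is known, the residual discrepancy between $\rho_\ell$ and $\rho_\ell^{\mr{ad}}$ lives in the center $Z_G$, and its Galois action is controlled by class field theory together with the reciprocity law for the canonical models of the abelian Shimura variety attached to the maximal abelian quotient of $G$. This argument is insensitive to the real-rank hypothesis, so it lifts the adjoint compatibility of Step 2 to full $G$-valued compatibility, yielding conclusions (1) and (2). Conclusion (3) follows by combining this lifting strategy with the crystalline adjoint input of \cite{huryn-kedlaya-klevdal-patrikis} at the high-rank factors and the crystalline theory embedded in Kisin's integral models at the rank-one factors.

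The step I expect to require the most care is the second: verifying that the adjoint canonical local systems factor cleanly through the $\QQ$-simple decomposition of $G^{\mr{ad}}$, coherently with integral models, connected components, and reflex fields, so that the inputs from \cite{klevdal-patrikis:SVcompatiblearXiv} and from Kisin can be patched together into a genuine adjoint compatibility statement for $(G, X)$. Once this geometric and arithmetic bookkeeping is in place, everything else reduces to invoking techniques already established in the present paper and its precursors.
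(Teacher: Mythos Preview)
Your proposal correctly expands the paper's one-sentence argument (the paper gives no proof beyond the remark that combining Theorem \ref{mainthmintro} with Kisin's abelian-type results handles the rank-$1$ factors), and the factor-by-factor strategy at the adjoint level is exactly what is intended.

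Two refinements are worth recording. First, your Step 3 characterization of the adjoint-to-$G$ lift as ``class field theory plus the reciprocity law'' is not how the paper's argument actually runs. The proof of Theorem \ref{mainthm} uses adjoint and abelian compatibility to write $\rho_{\lambda \leadsto \lambda', v} = \rho_{\lambda', v} \cdot \chi$ with $\chi$ valued in the finite group $Z_{G^{\mr{der}}}$, and then kills $\chi$ by combining CM lifts of $\mu$-ordinary points from \cite{bakker-shankar-tsimerman:canonicalmodels} (available for all Shimura data, not only those satisfying the real-rank hypothesis) with Rajan's strong-multiplicity-one theorem. So the lift is indeed insensitive to real rank, but the mechanism is the $\mu$-ordinary density argument, not reciprocity alone.

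Second, for Part (1) the argument of \S\ref{ramsection} invokes $G^{\mr{ad}}$-superrigidity (in Lemma \ref{tame}) and therefore does not apply directly when some $\QQ$-simple factor has real rank $1$. The cleanest route is to note that adjoint Part (1), obtained factor-by-factor from \cite{klevdal-patrikis:SVcompatiblearXiv} and Kisin, is precisely the input \cite{bakker-shankar-tsimerman:canonicalmodels} needs to produce integral canonical models at all levels; Part (1) for $\rho_\ell$ itself then follows by extending the tower $\mr{Sh}_K \to \mr{Sh}_{K_0}$ over these models. Your Step 3 should make this dependence on \cite{bakker-shankar-tsimerman:canonicalmodels} explicit rather than suggesting that \S\ref{ramsection} applies verbatim.
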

Finally, we remark that the ``pointwise" compatibility statements in these results are really consequences of the stronger statements that for fixed $v$, all $\rho_{\ell}|_{\mc{S}_{\kappa(v)}}$ (and $\rho_{p, v}^{\FIso^{\dagger}}$) are $G$-companions in the strong sense of \cite{drinfeld:pross}. The proofs will clarify this assertion.

Theorem \ref{mainthmintro} results from combining our previous work \cite{klevdal-patrikis:SVcompatiblearXiv} with the work of Bakker-Shankar-Tsimerman (\cite{bakker-shankar-tsimerman:canonicalmodels}) showing that all Shimura varieties have \textit{canonical} integral models away from a finite set of bad primes (essentially the $N$ of Theorem \ref{mainthmintro}). We should clarify the logical relation between these three papers: \cite{bakker-shankar-tsimerman:canonicalmodels} makes use of Part (1) of Theorem \ref{mainthmintro} (see Lemma 5.1, Theorem 5.2 of \textit{loc. cit.}), which we established for $\rho_{\ell}^{\mr{ad}}$ in \cite[Theorem 2.3]{klevdal-patrikis:SVcompatiblearXiv}. In \S \ref{ramsection}, we give a proof of Part (1) of Theorem \ref{mainthmintro} in general that does not depend on the two earlier papers. Using their main result on integral models along with our main theorem from \cite{klevdal-patrikis:SVcompatiblearXiv} (the analogue of Part (2) of Theorem \ref{mainthmintro} for $\rho_{\ell}^{\mr{ad}}$), Bakker-Shankar Tsimerman prove (\cite[Theorem 9.7]{bakker-shankar-tsimerman:canonicalmodels}) that for $v \nmid N$, every $\mu$-ordinary point of the special fiber $\mc{S}_v$ lifts to a characteristic zero special point.\footnote{To be precise, \cite{bakker-shankar-tsimerman:canonicalmodels} use our earlier result for $\rho_{\ell}^{\mr{ad}}$ to cover the non-abelian type (or the real rank at most 1) cases; in abelian type, \cite[Corollary 2.3.1]{kisin:modpabelian} establishes the desired compatibility.} We in turn use this result on CM lifting of $\mu$-ordinary points to establish Parts (2) and (3) of Theorem \ref{mainthmintro}.

We now summarize each section and the arguments therein. In \S \ref{compsection}, we recall results of Drinfeld (\cite{drinfeld:pross}) on Deligne's companions conjecture; we need his work on companions for representations valued in not necessarily connected reductive (not only semisimple) groups, and so we elaborate slightly on the results in \cite[\S 6]{drinfeld:pross}; we also include a discussion of overconvergent $F$-isocrystal companions. In \S \ref{ramsection}, we show (Corollary \ref{svindram}) that Part (1) of Theorem \ref{mainthmintro} holds: there is an integer $N$ such that for all $\ell$, all $\rho_{\ell}$ are unramified away from $N \ell$. For the $G^{\mr{ad}}$-representations, we showed this in \cite[Proposition 2.3]{klevdal-patrikis:SVcompatiblearXiv} by an abstract argument using superrigidity and Deligne's conjecture. When $G$ is not an adjoint group, descents to $\pi_1(S_{K_0, s})$ of a given representation of $\pi_1^{\mr{top}}(S_{K_0, s}(\CC))$ (extended to $\pi_1((S_{K_0, s})_{\ol{\QQ}})$ using integrality) are not unique, and it is easy to choose descents for varying $\ell$ that have no common finite bad set $N$ as in Part (1) of the Theorem. We need to use the fact that our descents arise from the family of canonical models; it turns out that all we need in addition to the arguments in the adjoint case is that there is a number field point $y$ of $S_{K_0, s}$ for which the specialized local systems $\rho_{\ell}|_y$ are unramified outside $N_y \ell$ for some integer $N_y$ independent of $\ell$. In \S \ref{SVsection} we prove Parts (2) and (3) of Theorem \ref{mainthmintro}. Given the result of \S \ref{ramsection}, from \cite[Theorem 9.2]{bakker-shankar-tsimerman:canonicalmodels} we deduce that on the $\mu$-ordinary locus of the special fiber $\mc{S}_v$ ($v \nmid N \ell \ell'$), $\rho_{\ell}$ and $\rho_{\ell'}$ satisfy the companions property (2). Using the companions construction (\S \ref{compsection}) and Zariski-density of the $\mu$-ordinary locus, we can deduce the companions property on all of $\mc{S}_v$ via a strong multiplicity 1 property for Galois representations. The crystalline compatibility works analogously but requires as additional input the overconvergent ($G$-valued) F-isocrystals associated to the crystalline (by Esnault-Groechenig: \cite[Appendix]{pila-shankar-tsimerman:andre-oort}) local systems $\rho_p$ in our joint work \cite{huryn-kedlaya-klevdal-patrikis}.

We end this introduction by remarking on what we do not do, and what we hope to pursue in future work. We view our results in the framework of the Langlands-Rapoport conjecture, as a significant step toward associating Kottwitz triples to mod $p$ points on Shimura varieties; in abelian type, Kisin has proven the Langlands-Rapoport conjecture (\cite{kisin:modpabelian}), but this remains a wide-open and fundamental problem in non-abelian type. We further note that Kottwitz triples are not only a useful intermediary in work on the full Langlands-Rapoport conjecture (as in \textit{loc. cit.}) but are in principle a sufficient intermediary to calculate the zeta function of the Shimura variety; indeed, this is how Kottwitz first employed them in \cite{kottwitz:modppoints}. Recall (\cite[\S 4.3]{kisin:modpabelian}) that a Kottwitz triple over $k= \mathbb{F}_{p^r}$ is a triple $(\gamma_0, (\gamma_{\ell})_{\ell \neq p}, \delta)$ where
\begin{itemize}
\item $\gamma_0 \in G(\QQ)$, well-defined up to  $G(\ol{\QQ})$-conjugacy;
\item $(\gamma_{\ell})_{\ell \neq p} \in G(\mathbb{A}^p_f)$;
\item $\delta \in G(\mr{Frac}(W(k)))$, well-defined up to Frobenius conjugacy by elements of $G(W(k))$.
\end{itemize}
These data are required to satisfy several conditions:
\begin{itemize}
\item $\gamma_0$ is $G(\ol{\mathbb{A}}^p_f)$-conjugate to $(\gamma_{\ell})_{\ell \neq p}$, where $\ol{\mathbb{A}}_f^p$ is the restricted direct product of the $\ol{\QQ}_{\ell}$ ($\ell \neq p$) with respect to the $\ol{\ZZ}_{\ell}$.
\item $\gamma_0$ is $G(\ol{\QQ}_p)$-conjugate to $\gamma_p:= \delta \sigma(\delta) \cdots \sigma^{r-1}(\delta)$.
\item $\gamma_0 \in G(\RR)$ is elliptic.
\item After possibly replacing $k$ by a finite extension, there is an inner twisting $I$ of $I_0:= \mr{Cent}_G(\gamma_0)$ such that $I \otimes_{\QQ} \RR$ is anisotropic mod center, and for all primes $\ell$, $I \otimes_{\QQ} \QQ_{\ell}$ is isomorphic to $I_{\ell}$ as an inner twist of $I_0 \otimes_{\QQ} \QQ_{\ell}$ (here $I_{\ell}$ for $\ell \neq p$ is $\mr{Cent}_{G_{\QQ_{\ell}}}(\gamma_{\ell})$, and for $\ell = p$ it is the Frobenius-centralizer: see \cite[\S 4.3.1]{kisin:modpabelian} for details).
\end{itemize}
To every $k$-point $x \in \mc{S}(k)$ ($p \nmid N$) we associate a triple $(\gamma_0, (\gamma_{\ell})_{\ell \neq p}, \delta)$, \textit{except} that we have only constructed $\gamma_0$ in general as an element of $[G \git G](\QQ)$. With that caveat, we have verified the first and second desiderata for the triple: $\gamma_{\ell}$ is of course $\rho_{\ell}(\Frob_x)$, and $\delta$ arises from the absolute Frobenius ($\gamma_p$ being the linearized version) on the $F$-isocrystal at $x$ arising from the crystalline local system $\rho_p$; Kisin's integral $p$-adic Hodge theory (\cite[\S 2]{kisin:intmodabelian}) gives the precise well-definedness property of $\delta$. The key question remaining seems to be (for $x$ not in the $\mu$-ordinary locus, where there is no issue) finding the actual rational representative $\gamma \in G(\QQ)$; we hope to return to this problem (and the verification of the remaining conditions) in future work.

\section{Recollection on companions}{\label{compsection}}
For a smooth connected variety $X$ over $\mathbb{F}_p$ 
and a geometric point $\xi$ of $X$, set $\pi_1(X)= \pi_1(X, \xi)$. Drinfeld proves (\cite[Theorem 1.4.1]{drinfeld:pross}) that for any semisimple, not necessarily connected, group $H$ over $\ol{\QQ}$, any prime $\lambda$ not above $p$ of $\ol{\QQ}$, and any continuous representation $\rho_{\lambda} \colon \pi_1(X) \to H(\ol{\QQ}_{\lambda})$ with Zariski-dense image, $\rho_{\lambda}$ has for any other place $\lambda' \nmid p$ a companion
\[
\rho_{\lambda \leadsto \lambda'} \colon \pi_1(X) \to H(\ol{\QQ}_{\lambda'}),
\]
also with Zariski-dense image. This $\rho_{\lambda \leadsto \lambda'}$ has the \textit{weak companion property} that for all closed points $x \in X$, $\rho_{\lambda}(\Frob_x)$ and $\rho_{\lambda \leadsto \lambda'}(\Frob_x)$ define a common element of the GIT quotient (space of conjugacy classes) $[H \git H](\ol{\QQ})$ (inside the $\ol{\QQ}_{\lambda}$ and $\ol{\QQ}_{\lambda'}$ points, respectively); here and throughout we write $\Frob_x$ for a geometric Frobenius element associated to the residue field $\kappa(x)$. It also has a deeper \textit{strong companion property} that characterizes it uniquely even in the absence of the implication ``local Frobenius conjugacy implies global conjugacy." We give Drinfeld's formulation in terms of a ``universal cover" $\wt{X}$ of $X$. Let $\wt{X}$ be a pro-object of the category of finite \'etale covers of $X$ that pro-represents a fiber functor on $\mr{F\acute{E}t}(X)$, let $\Pi= \Aut(\wt{X}/X)$, and choose a geometric point $\tilde{\xi}$ of $\wt{X}$ over our geometric point $\xi$. This choice induces an isomorphism $\Pi \cong \pi_1(X, \xi)^{\mr{op}}$ and (composing with $g \mapsto g^{-1}$) an isomorphism $\Pi \cong \pi_1(X, \xi)$, which will allow the assertions on existence of companions to be transferred to $\pi_1(X)$. 

Now, for fixed $\lambda$, let $\hat{\Pi}_{\lambda}$ be the inverse limit over the targets of all Zariski-dense continuous homomorphisms $\Pi \to H(\ol{\QQ}_{\lambda})$ to semisimple groups $H$. $\Pi$ contains a canonical subset of Frobenius elements $\Pi_{\mr{Frob}}= \{\Frob_{\tilde{x}}^n: \tilde{x} \in |\wt{X}|, n \geq 1\}$ coming from the classical theory of decomposition groups, and our chosen isomorphism transfers this subset to $\pi_1(X)$ as the union of conjugacy classes of positive powers of geometric Frobenii at all closed points. The groups $\hat{\Pi}_{\lambda}$ descend, uniquely up to inner automorphisms by the identity component, to groups $\hat{\Pi}_{(\lambda)}$ over $\ol{\QQ}$. Drinfeld shows that for two places $\lambda$, $\lambda'$, there is a unique isomorphism $\hat{\Pi}_{(\lambda)} \cong \hat{\Pi}_{(\lambda')}$, modulo inner automorphisms by the identity component, preserving the canonical diagrams
\begin{equation}\label{canonical}
\Pi_{\Frob} \to [\hat{\Pi}_{(\lambda)} \git \hat{\Pi}^0_{(\lambda)}](\ol{\QQ}) \to \Pi= \pi_0(\hat{\Pi}_{(\lambda)}). 
\end{equation}
This isomorphism produces the companions $\rho_{\lambda \leadsto \lambda'}$: a homomorphism $\hat{\Pi}_{\lambda} \to H_{\ol{\QQ}_{\lambda}}$ induces a homomorphism $\hat{\Pi}_{(\lambda)} \to H$ (unique up to $\hat{\Pi}_{(\lambda)}^0$-conjugation), which can then be transported to its $\lambda'$-adic version. We will not make explicit use of this stronger companion property, but we will note when (as will be the case) our assertions of compatibility have this stronger meaning. 

In the present paper we will need a variant with reductive monodromy groups; we will also need the companion construction (in the reductive case) when $\lambda' \vert p$. In the case $\lambda, \lambda' \nmid p$, Drinfeld presents one reductive variant in \cite[\S 6]{drinfeld:pross}, where he defines a group $\Pi^{\mr{mot}}$ over $\ol{\QQ}$ that similarly allows for the construction of weak or strong companions for continuous homomorphisms $\rho_{\lambda}  \colon \Pi \to H(\ol{\QQ}_{\lambda})$ with reductive (not necessarily connected) algebraic monodromy group such that in each representation of $H$ and for all $x \in |X|$, the eigenvalues of $\Frob_x$ are all $\# \kappa(x)$-Weil numbers. We will explain this along with a version that produces companions for local systems that are plain of characteristic $p$ (rather than ``motivic," i.e. a direct sum of pure) in the sense of \cite{chin:indl}.

Following \cite[\S 6]{drinfeld:pross}, for $\lambda \nmid p$ let $\hat{\Pi}_{\lambda}^{\mr{red}}$ be the pro-reductive $\lambda$-adic completion of $\Pi$. 
Now let $\lambda \vert p$, and let $\FIso^{\dagger}(X)$ be the category of overconvergent $F$-isocrystals on $X$. Its $\ol{\QQ}_{\lambda}$-linearization $\FIso^{\dagger} \otimes_{\QQ_p} \ol{\QQ}_\lambda$ is a neutral Tannakian category over $\ol{\QQ}_{\lambda}$, and choosing a fiber functor (for instance, by pulling back to a closed point: see \cite[Example 3.1 ff.]{huryn-kedlaya-klevdal-patrikis}) we obtain a Tannakian group $\pi_1^{\FIso^{\dagger}}(X)$. As in the $\lambda \nmid p$ case, we write $\hat{\Pi}_{\lambda}$ and $\Pired$ for the maximal pro-semisimple and maximal pro-reductive quotients of $\pi_1^{\FIso^{\dagger}}(X)$.

We have the following maps:

\begin{itemize}
\item Suppose $\lambda \nmid p$. Then $\Xi_{\lambda} \colon \ol{\ZZ}_{\lambda}^\times \to \Hom(\Pired, \mathbb{G}_m)$ is constructed as the composition
\[
\Xi_{\lambda} \colon \ol{\ZZ}_{\lambda}^\times \simeq \Hom_{\mr{cts}}(\hat{\ZZ}, \ol{\QQ}_{\lambda}^\times) \to \Hom_{\mr{cts}}(\Pi, \ol{\QQ}_{\lambda}^\times)=\Hom(\Pired, \mathbb{G}_m),
\]
where the second map is precomposition with the canonical homomorphism $\Pi \to \hat{\ZZ}$ (under which $\Frob_{\tilde{x}}$ maps to $[\kappa(x):\mathbb{F}_p]$).
\item Suppose $\lambda \vert p$. Then $\Xi_{\lambda} \colon \ol{\QQ}_{\lambda}^\times \to \Hom(\Pired, \mathbb{G}_m)$ is given by interpreting an element of $\ol{\QQ}_{\lambda}^\times$ as a rank-1 object of $\FIso(\Spec(\mathbb{F}_p)) \otimes_{\QQ_p} \ol{\QQ}_{\lambda}$ (see \cite[Example 3.1]{huryn-kedlaya-klevdal-patrikis}) and then pulling back along $X \to \Spec(\mathbb{F}_p)$.
\end{itemize}
These homomorphism (dualized) and the canonical projections $\Pired \to \hat{\Pi}_{\lambda}$ induce the following isomorphisms:
\begin{itemize}
\item Suppose $\lambda \nmid p$. Then 
\[
\Pired \to \hat{\Pi}_{\lambda} \times_{\hat{\ZZ}} \Hom(\ol{\ZZ}_{\lambda}^\times, \mathbb{G}_m)
\]
is an isomorphism by \cite[Proposition 3.5.2]{drinfeld:pross}
\item Suppose $\lambda \vert p$. Then
\[
\Pired \to \hat{\Pi}_{\lambda} \times_{\hat{\ZZ}} \Hom(\ol{\QQ}_{\lambda}^\times, \mathbb{G}_m)
\]
is an isomorphism by Lemma \ref{Pired} below.
\end{itemize}
I owe the following lemma to Jake Huryn's write-up in a draft version of \cite{huryn-kedlaya-klevdal-patrikis}:
\begin{lem}\label{Pired}
Suppose $\lambda \vert p$. Then the map
\[
\Pired \to \hat{\Pi}_{\lambda} \times_{\hat{\ZZ}} \Hom(\ol{\QQ}_{\lambda}^\times, \mathbb{G}_m)
\]
just described is an isomorphism. 
\end{lem}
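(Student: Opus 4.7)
The plan is to mirror Drinfeld's proof of \cite[Proposition 3.5.2]{drinfeld:pross}, replacing $\lambda$-adic local systems with overconvergent $F$-isocrystals. First I would verify well-definedness of the map: both compositions $\Pired \to \hat{\Pi}_\lambda \to \hat{\ZZ}$ and $\Pired \to \Hom(\ol{\QQ}_\lambda^\times, \mathbb{G}_m) \to \hat{\ZZ}$ must coincide with the canonical projection induced by $X \to \Spec(\mathbb{F}_p)$. For the second composition, this requires identifying the component group of the diagonalizable group $\Hom(\ol{\QQ}_\lambda^\times, \mathbb{G}_m)$ with $\hat{\ZZ}$ via the torsion subgroup $\mu(\ol{\QQ}_\lambda) \subset \ol{\QQ}_\lambda^\times$, after which the agreement reduces to the observation that a root of unity $\zeta$ corresponds to a rank-$1$ $F$-isocrystal on $\Spec(\mathbb{F}_p)$ whose pullback to $X$ indeed factors through $\Pired \twoheadrightarrow \hat{\ZZ}$.

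Since both source and target are pro-reductive groups over $\ol{\QQ}_\lambda$, Tannakian duality lets me test the isomorphism assertion on character lattices together with passage to pro-semisimple quotients. The pro-semisimple quotient of both sides is $\hat{\Pi}_\lambda$ by construction, so the content concentrates on the kernel of $\Pired \to \hat{\Pi}_\lambda$, namely the central pro-torus of $(\Pired)^0$: I must identify it with the connected diagonalizable group whose character lattice is $\ol{\QQ}_\lambda^\times/\mu(\ol{\QQ}_\lambda)$. Dually, this says that every rank-$1$ object of $\FIso^\dagger(X) \otimes \ol{\QQ}_\lambda$ is uniquely, up to the $\mu(\ol{\QQ}_\lambda)$-ambiguity, a tensor product of a finite-order character of $\Pi$ (equivalently, a rank-$1$ $F$-isocrystal factoring through $\hat{\Pi}_\lambda$) with a rank-$1$ $F$-isocrystal pulled back from $\Spec(\mathbb{F}_p)$.

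The technical heart is therefore this classification of rank-$1$ overconvergent $F$-isocrystals on $X$. In the $\ell \neq p$ setting Drinfeld gets it from Weil II: any nontrivial contribution from the geometric fundamental group would carry pure Frobenius weights that cannot be absorbed into a torsion character, forcing all non-torsion information into the unramified (i.e.\ pulled-back-from-$\Spec(\mathbb{F}_p)$) direction. The $p$-adic overconvergent analog rests on the corresponding purity and Weil-II theorems of Kedlaya and Abe--Caro for overconvergent $F$-isocrystals. This is the step I expect to be the main obstacle: one must carefully invoke the $p$-adic Weil conjectures, and reconcile the Newton-polygon data of a rank-$1$ overconvergent $F$-isocrystal with its rigid-cohomological Picard-type classification, to conclude the required twist decomposition in exact parallel with the $\ell$-adic case.
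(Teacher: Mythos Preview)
Your overall structure—reducing to the rank-$1$ classification, i.e.\ showing every rank-$1$ overconvergent $F$-isocrystal is, up to torsion, pulled back from $\Spec(\mathbb{F}_p)$—is correct and matches the paper's approach. But you have misidentified the key input for that classification.

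In the $\ell\neq p$ case, Drinfeld does \emph{not} deduce the rank-$1$ classification from Weil~II. Weil~II controls weights, and for a rank-$1$ object purity of some weight is automatic; it tells you nothing about whether a weight-$0$ rank-$1$ character is torsion. The actual input is class field theory over finite fields (together with semisimplicity of geometric monodromy), which is what forces continuous abelian characters to be torsion after a constant twist. Your proposed $p$-adic analogue—invoking the Kedlaya/Abe--Caro $p$-adic Weil~II and ``Newton-polygon data with rigid-cohomological Picard-type classification''—is therefore aimed at the wrong target and would not by itself yield the torsion conclusion.

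The paper instead cites the direct result of Crew (for curves) and Abe \cite[Lemma~6.1]{abe:langlandsp} (in general): any rank-$1$ overconvergent $F$-isocrystal on a finite-type $\mathbb{F}_p$-scheme is torsion after twisting by one pulled back from the ground field. This is precisely the $p$-adic class-field-theoretic statement you need for surjectivity. The paper also packages the argument via Drinfeld's criterion \cite[Lemma~3.5.1]{drinfeld:pross}, which (since $\ol{\QQ}_\lambda^\times/\mu_\infty$ is a $\QQ$-vector space) requires checking that $\ol{\QQ}_\lambda^\times/\mu_\infty \to \Hom(G',\mathbb{G}_m)/\text{torsion}$ is an isomorphism for \emph{every} finite-index subgroup $G'\subset\Pired$; these are identified with the corresponding groups for finite \'etale covers $X'\to X$ via \cite[Proposition~B.7.6]{drinfeld-kedlaya:slopes}. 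Your sketch only treats the central torus of the identity component and omits this passage to covers.
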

\begin{proof}
We use the criterion of \cite[Lemma 3.5.1]{drinfeld:pross}. $\ol{\QQ}_{\lambda}^\times/\mu_{\infty}$ is a $\QQ$-vector space, so it suffices to show that for all finite index subgroups $G'$ of $\Pired$, the map
\[
\ol{\QQ}_{\lambda}^\times/\mu_{\infty} \to \Hom(G', \mathbb{G}_m)/\mr{torsion}
\]
induced by $\Xi_{\lambda}$ is an isomorphism. First, by \cite[Proposition B.7.6]{drinfeld-kedlaya:slopes} any finite index subgroup $G' \subset \Pired$ is the image of the injection $\hat{\Pi}_{\lambda, X'}^{\mr{red}} \to \Pired$, where $X'= \wt{X}/U \to X$ is determined by taking the image $U$ of $G'$ in $\Pi$ (and $\hat{\Pi}_{\lambda, X'}^{\mr{red}}$ denotes the analogous pro-reductive completion of $\pi_1^{\FIso^{\dagger}}(X')$). If $\Xi_{\lambda}(\alpha)|_{X'}$ is torsion, then the $F$-isocrystal on $\Spec(\mathbb{F}_p)$ associated to $\alpha$ is already torsion after passing to some finite field extension $k/\mathbb{F}_p$ (e.g., the field of constants of $X'$), where the linearized Frobenius acts as $\alpha^{[k: \mathbb{F}_p]}$. Thus $\alpha \in \mu_{\infty}$ is torsion. 

For surjectivity, we use the result of Crew ($\dim(X)=1$) and Abe (the general case) that up to a twist pulled back from the ground field any rank 1 overconvergent $F$-isocrystal on a finite-type $\mathbb{F}_p$-scheme is torsion (\cite[Lemma 6.1]{abe:langlandsp}). Let $\mc{E}$ be the rank 1 overconvergent $F$-isocrystal on $X'$ corresponding to an element of $\Hom(\hat{\Pi}_{\lambda, X'}^{\mr{red}}, \mathbb{G}_m)$; then there exists a rank 1 $F$-isocrystal $\mc{F}$ on $X'$ that is pulled back from $\Spec(\mathbb{F}_p)$ such that $\mc{E} \otimes \mc{F}$ is torsion. Thus, up to twisting by torsion objects $\mc{E}$ lies in the image of $\Xi_{\lambda}$, as desired. 

\end{proof}

Replacing $\ol{\ZZ}_{\lambda}^\times$ by its subgroup $\mc{W}_p \subset \ol{\QQ}^\times \cap \ol{\ZZ}_{\lambda}^\times$ (intersection inside $\ol{\QQ}_{\lambda}^\times$) of all $p$-Weil numbers, Drinfeld obtains a pro-reductive group scheme quotient $\hat{\Pi}_{\lambda}^{\mr{red}} \twoheadrightarrow \hat{\Pi}_{\lambda}^{\mr{mot}}$ whose finite-dimensional representations are those of $\hat{\Pi}_{\lambda}^{\mr{red}}$ such that the corresponding $\lambda$-adic representation has, for all $x \in |X|$, $\Frob_x$-eigenvalues that are $\#\kappa(x)$-Weil numbers. Exactly the same construction applies if instead of $\mc{W}_p$ we take the subgroup
\[
\mc{P}_p \subset \ol{\QQ}^\times \cap \ol{\ZZ}_{\lambda}^\times
\]
of algebraic numbers that are plain of characteristic $p$ (units at all finite places away from $p$), yielding an intermediate quotient
\[
\hat{\Pi}_{\lambda}^{\mr{red}} \twoheadrightarrow \hat{\Pi}_{\lambda}^{\mr{plain}} \twoheadrightarrow \hat{\Pi}_{\lambda}^{\mr{mot}}
\]
that we claim classifies continuous semisimple $\lambda$-adic representations whose Frobenius eigenvalues are all plain of characteristic $p$. The analogous claim is made in \cite[\S 6]{drinfeld:pross} with $\mc{W}_p$ in place of $\mc{P}_p$, and in covering both cases and the case of $p$-adic coefficients we will take the opportunity to give a few more details than \textit{loc. cit.}

For any $\lambda$, define $\hat{\Pi}_{\lambda}^{\mr{plain}}$ and $\hat{\Pi}_{\lambda}^{\mr{mot}}$ to be the quotients of $\hat{\Pi}_{\lambda}^{\mr{red}}$ such that:
\begin{itemize}
\item If $\lambda \nmid p$, a homomorphism $\rho \colon \hat{\Pi}_{\lambda}^{\mr{red}} \to H$ factors through $\hat{\Pi}_{\lambda}^{\mr{plain}}$ (respectively, $\Pimot$) if and only if for all representations $r \colon H \to \mr{GL}_N$, the $\lambda$-adic representation $r \circ \rho_{\lambda}$ associated to $r \circ \rho$ satisfies: for all closed points $x \in X$, all eigenvalues of $r \circ \rho_{\lambda}(\Frob_x)$ are plain of characteristic $p$ (respectively, are $\#\kappa(x)$-Weil numbers).
\item If $\lambda \vert p$, a homomorphism $\rho \colon \hat{\Pi}_{\lambda}^{\mr{red}} \to H$ factors through $\hat{\Pi}_{\lambda}^{\mr{plain}}$ (respectively, $\Pimot$) if and only if for all representations $r \colon H \to \mr{GL}_N$, the object $\mc{F}$ of $\FIso^{\dagger}(X)$ associated to $r \circ \rho$ satisfies: for all closed points $x \in X$, the eigenvalues of the $\kappa(x)$-linearized Frobenius on $x^* \mc{F}$ are plain of characteristic $p$ (respectively, are $\# \kappa(x)$-Weil numbers).
\end{itemize}
(These constructions make sense, since for any $H \to H'$ and any $\rho \colon \Pired \to H$ with one of the properties above, the composition $\Pired \xrightarrow{\rho} H \to H'$ also has the same property: we can then define $\Pimot$ and $\Piplain$ as projective limits over surjective homomorphisms with the required properties.) We will deduce the following lemma from \cite[Lemma 3.5.1, Proposition 3.5.2]{drinfeld:pross} and Lemma \ref{Pired}: 
\begin{lem}\label{piplainlem}
For any finite place $\lambda$ of $\ol{\QQ}$, there are canonical isomorphisms
\[
\Piplain \to \hat{\Pi}_{\lambda} \times_{\hat{\ZZ}} \Hom(\mc{P}_p, \mathbb{G}_m)
\]
and 
\[
\Pimot \to \hat{\Pi}_{\lambda} \times_{\hat{\ZZ}} \Hom(\mc{W}_p, \mathbb{G}_m).
\]
\end{lem}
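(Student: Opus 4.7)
The plan is to adapt the proof of Lemma~\ref{Pired}, using Drinfeld's criterion \cite[Lemma 3.5.1]{drinfeld:pross} to identify $\Piplain$ and $\Pimot$ as the quotients of $\Pired$ obtained by replacing the factor $\Hom(\ol{\ZZ}_{\lambda}^\times, \mathbb{G}_m)$ (or $\Hom(\ol{\QQ}_\lambda^\times, \mathbb{G}_m)$ when $\lambda \mid p$) by its quotient $\Hom(\mc{P}_p, \mathbb{G}_m)$ (resp.\ $\Hom(\mc{W}_p, \mathbb{G}_m)$) in the fiber-product description of $\Pired$. Writing $M_\lambda$ for whichever of $\ol{\ZZ}_\lambda^\times, \ol{\QQ}_\lambda^\times$ applies, the inclusions $\mc{W}_p \subset \mc{P}_p \subset M_\lambda$ induce canonical surjections
\[
\Pired \twoheadrightarrow \hat{\Pi}_{\lambda} \times_{\hat{\ZZ}} \Hom(\mc{P}_p, \mathbb{G}_m) \twoheadrightarrow \hat{\Pi}_{\lambda} \times_{\hat{\ZZ}} \Hom(\mc{W}_p, \mathbb{G}_m),
\]
and I must verify that these coincide with the defining quotients $\Pired \twoheadrightarrow \Piplain \twoheadrightarrow \Pimot$.

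First I would check the forward implication: every finite-dimensional representation $r$ of the middle (resp.\ right) fiber-product group has plain (resp.\ Weil-number) Frobenius eigenvalues. Since the $\Hom$-factors are diagonalizable, the representation space decomposes into weight spaces indexed by elements $\alpha \in \mc{P}_p$, and on each such weight space Frobenius at a closed point $x$ acts via $r_A(\Frob_x) \cdot \alpha^{[\kappa(x):\mathbb{F}_p]}$ for some representation $r_A$ of $\hat{\Pi}_\lambda$. The key algebraicity input is that the eigenvalues of $r_A(\Frob_x)$ are Weil numbers — established for $\lambda \nmid p$ by L.~Lafforgue and Drinfeld, and for $\lambda \mid p$ by Abe's companions theorem together with the inputs used in the proof of Lemma~\ref{Pired}. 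Granting this, the product of a Weil number with a power of an element of $\mc{P}_p$ lies in $\mc{P}_p$ by closure under multiplication (and similarly for $\mc{W}_p$, noting $\mc{W}_p \subset \mc{P}_p$).

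For the converse, by \cite[Lemma 3.5.1]{drinfeld:pross} it suffices to show that any character $\chi \colon \Pired \to \mathbb{G}_m$ factoring through $\Piplain$ (resp.\ $\Pimot$) already factors through $\Hom(\mc{P}_p, \mathbb{G}_m)$ (resp.\ $\Hom(\mc{W}_p, \mathbb{G}_m)$). Decomposing $\chi$ via the fiber-product structure of $\Pired$ as a product of a finite-order character $\chi_1$ of $\hat{\Pi}_\lambda$ with $\Xi_\lambda(\alpha)$ for some $\alpha \in M_\lambda$, the plainness of $\chi(\Frob_x)$ for all $x$ forces $\alpha^{[\kappa(x):\mathbb{F}_p]}$ (up to a root of unity) to be plain at every closed point $x$, and hence $\alpha \in \mc{P}_p$ using that $\mc{P}_p$ is closed under extraction of $n$-th roots among algebraic numbers. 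The same argument handles $\Pimot$, using closure of $\mc{W}_p$ under roots. The hard part will be marshalling the algebraicity input cleanly and uniformly across the cases $\lambda \nmid p$ and $\lambda \mid p$; once that is in hand, the remainder is a routine transfer of Drinfeld's structural arguments from the $\ell$-adic to the $p$-adic setting via Lemma~\ref{Pired}.
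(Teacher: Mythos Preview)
Your plan works cleanly for $\Piplain$: since $\mc{P}_p/\mu_\infty$ is a $\QQ$-vector space (an algebraic number is a $p$-unit iff some power of it is), Drinfeld's criterion \cite[Lemma 3.5.1]{drinfeld:pross} applies directly, and the paper's Remark after the Lemma confirms this route is valid there. The gap is in your treatment of $\Pimot$: $\mc{W}_p$ is \emph{not} closed under extraction of roots. For instance $p \in \mc{W}_p$ but $p^{1/3} \notin \mc{W}_p$, since $|p^{1/3}| = p^{1/3}$ is not of the form $p^{n/2}$ with $n \in \ZZ$; equivalently $\mc{W}_p/\mu_\infty$ is not a $\QQ$-vector space, and the paper explicitly flags this as the reason \cite[Lemma 3.5.1]{drinfeld:pross} cannot be invoked as a black box for $\Pimot$. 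Your converse computation can in fact be salvaged, but not by the property you state: the defining condition for $\Pimot$ is that $\chi(\Frob_x)$ be a $\#\kappa(x)$-Weil number (not merely a $p$-Weil number), and from $|\iota(\alpha)|^{d_x} = (\#\kappa(x))^{w/2} = p^{d_x w/2}$ one does get $|\iota(\alpha)| = p^{w/2}$ with $w \in \ZZ$, hence $\alpha \in \mc{W}_p$. That is a different (and correct) statement from closure of $\mc{W}_p$ under roots.

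The paper's own proof avoids this issue by a more structural route. Rather than verifying both implications on Frobenius eigenvalues, it extracts from \cite[Lemma 3.5.1]{drinfeld:pross} only the reduction to showing that the maps $A/A_{\mr{tors}} \xrightarrow{f} \Hom(G^0, \mathbb{G}_m) \xrightarrow{g} \Hom(Z_G^0, \mathbb{G}_m)$ are isomorphisms for $(A,G) \in \{(\mc{P}_p, \Piplain), (\mc{W}_p, \Pimot)\}$, and then deduces both $f$ and $g$ formally from the already-established $\Pired$ case (Lemma~\ref{Pired} and \cite[Proposition 3.5.2]{drinfeld:pross}) via a pullback-square argument on character groups. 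This treats $\Piplain$ and $\Pimot$ uniformly and needs no divisibility hypothesis on $A$.
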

\begin{proof}
In each case, the first factor of the map is the projection to the pro-semisimple quotient. To define the second factor, we construct the dual map $A \to \Hom(G, \mathbb{G}_m)$ for $(A, G)$ equal to $(\mc{P}_p, \Piplain)$ or $(\mc{W}_p, \Pimot)$ by restricting the source of the maps $\Xi_{\lambda}$.
\begin{itemize}
\item Suppose $\lambda \nmid p$. For $\alpha \in \mc{P}_p$ (respectively, $\alpha \in \mc{W}_p$), viewing $\Xi_{\lambda}(\alpha)$ as a homomorphism $\Pi \to \ol{\QQ}_{\lambda}^\times$, by construction $\Xi_{\lambda}(\alpha)(\Frob_x)= \alpha^{\#\kappa(x)}$; thus $\Xi_{\lambda}(\alpha)$ factors through $\Piplain$ (respectively $\Pimot$). 
\item Suppose $\lambda \vert p$. For $\alpha \in \mc{P}_p$ (respectively, $\alpha \in \mc{W}_p$), $\Xi_{\lambda}(\alpha)$, viewed as a rank 1 object of $\FIso(\Spec(\mathbb{F}_p)) \otimes_{\QQ_p} \ol{\QQ}_{\lambda}$, has the property that for all closed points $x$, $x^*(\Xi_{\lambda}(\alpha))$ is the rank-1 object of $\FIso(\Spec(\kappa(x))) \otimes_{\QQ_p} \ol{\QQ}_{\lambda}$ whose $\#\kappa(x)$-linearized Frobenius has eigenvalue $\alpha^{\# \kappa(x)}$; thus $\Xi_{\lambda}(\alpha)$ factors through $\Piplain$ (respectively, $\Pimot$).
\end{itemize}
Let $Z^{\mr{red}, 0}$, $Z^{\mr{plain}, 0}$, and $Z^{\mr{mot}, 0}$ be the connected components of the identity of the centers of $\Pired$, $\Piplain$, and $\Pimot$. For any pro-reductive group $G$, we have a short exact sequence
\[
1 \to Z_G^0 \to G \to G^{\mr{ss}} \to 1
\]
with $G^{\mr{ss}}$ the maximal pro-semisimple quotient. In all three cases under consideration, $G^{\mr{ss}}= \hat{\Pi}_{\lambda}$, so we see that 
\[
\ker(\Pired \to \Pimot)= \ker(Z^{\mr{red}, 0} \to Z^{\mr{mot}, 0})
\]
and
\[
\ker(\Pired \to \Piplain)= \ker(Z^{\mr{red}, 0} \to Z^{\mr{plain}, 0}).
\]
From the first part of the argument of \cite[Lemma 3.5.1]{drinfeld:pross}, we want to show that for each possibility $G= \Pired$, $G=\Piplain$, $G= \Pimot$ and (respectively) $A= \ol{\ZZ}_{\lambda}^\times$ ($\lambda \nmid p$) or $\ol{\QQ}_{\lambda}^\times$ ($\lambda \vert p$), $A= \mc{P}_p$, $A= \mc{W}_p$, the two maps
\[
A/A_{\mr{tors}} \xrightarrow{f} \Hom(G^0, \mathbb{G}_m) \xrightarrow{g} \Hom(Z_G^0, \mathbb{G}_m)
\]
are both isomorphisms. For $G= \Pired$ and $\lambda \nmid p$, this is \cite[Proposition 3.5.2]{drinfeld:pross}, and for $G= \Pired$ and $\lambda \vert p$, this is Lemma \ref{Pired}. We now explain how the other cases follow formally. First note that the map $f$ is an isomorphism in all three cases. To see this, we observe from the definitions that
\[
\xymatrix{
\ol{\QQ}_{\lambda}^\times \ar[r] & \Hom (\Pired, \mathbb{G}_m) \\
\mc{P}_p \ar[r] \ar@{^{(}->}[u] & \Hom(\Piplain, \mathbb{G}_m) \ar@{^{(}->}[u]
}
\]
is for $\lambda \vert p$ a pullback square and is moreover modulo torsion a pullback square; the same holds for $\lambda \nmid p$ with $\ol{\ZZ}_{\lambda}^\times$ in place of $\ol{\QQ}_{\lambda}^\times$, and with $\mc{W}_p$ and $\Pimot$ replacing $\mc{P}_p$ and $\Piplain$. Thus $\mc{P}_p/\mu_{\infty} \xrightarrow{\sim} \Hom(\Piplain, \mathbb{G}_m)/\Hom(\Piplain, \mathbb{G}_m)_{\mr{tors}}$ for all $\lambda$, and likewise for $\Pimot$. The same holds with $X$ replaced by any finite \'etale connected cover, so $\mc{P}_p/\mu_{\infty} \xrightarrow{\sim} \Hom((\Piplain)^0, \mathbb{G}_m)$ is also an isomorphism (again, likewise for $\Pimot$).

Injectivity of the map $g$ holds for any reductive group. For surjectivity, let $G$ be one of $\Pimot$ or $\Piplain$. Any element of $\Hom(Z_G^0, \mathbb{G}_m) \subset \Hom(Z^{\mr{red}, 0}, \mathbb{G}_m)$ is (by Lemma \ref{Pired} and \cite[Proposition 3.5.2]{drinfeld:pross}) the restriction to $Z^{\mr{red}, 0}$ of $\Xi_{\lambda}(\alpha)$ for some $\alpha \in \ol{\ZZ}_{\lambda}^\times$ ($\lambda \nmid p$) or $\alpha \in \ol{\QQ}_{\lambda}^\times$ ($\lambda \vert p$). Since $\ker(\Pired \to G)= \ker (Z^{\mr{red}, 0} \to Z_G^0)$, $\Xi_{\lambda}(\alpha) \colon \Pired \to \mathbb{G}_m$ must factor through $G \to \mathbb{G}_m$; this (as above) forces $\alpha$ to lie in $\mc{W}_p$ or $\mc{P}_p$ according to the choice of $G$. We conclude that $g \circ f$ is surjective, hence $g \circ f$ is an isomorphism (as are both $g$ and $f$), hence
\[
\Piplain \to \hat{\Pi}_{\lambda} \times_{\hat{\ZZ}} \Hom(\mc{P}_p, \mathbb{G}_m)
\]
and 
\[
\Pimot \to \hat{\Pi}_{\lambda} \times_{\hat{\ZZ}} \Hom(\mc{W}_p, \mathbb{G}_m)
\]
are both isomorphisms.

\end{proof}
\begin{rmk}
Ultimately these results (and \cite[Proposition 3.5.2]{drinfeld:pross} rely on class field theory and its bootstrapping to the semisimplicity of geometric monodromy.
\begin{enumerate}
\item Concretely, for $\lambda \nmid p$ the proof that a homomorphism $\rho_{\lambda} \colon \hat{\Pi}_{\lambda} \times_{\hat{\ZZ}} \Hom(\mc{P}_p, \mathbb{G}_m) \twoheadrightarrow H$ classifies a $\lambda$-adic representation that is plain of characteristic $p$ (and similarly for the ``weakly motivic" $\mc{W}_p$ case) can be interpreted as follows: the plain (or weakly motivic) property can be checked after restriction to a finite-index subgroup of $\Pi$, and from the semisimplicity of geometric monodromy one can check that after replacing $X$ by a suitable finite \'etale connected cover, $\rho_{\lambda}$ can be written as a product of homomorphisms $\hat{\Pi}_{\lambda} \to H^{\mr{geo}, 0}$ and $\Hom(\mc{P}_p, \mathbb{G}_m) \to Z_{H^0}^0$, with $H^{\mr{geo}}$ denoting the geometric monodromy group. The former is by Lafforgue's theorem pure weight zero in any finite-dimensional representation, and the latter corresponds (in a choice of basis of $Z_{H^0}^0 \cong \mathbb{G}_m^r$) to an $r$-tuple of elements of $\mc{P}_p$ that make the plainness conclusion evident. The converse (that plain representations of $\Pi$ correspond to homomorphisms out of $\Pired$ that factor through $\hat{\Pi}_{\lambda} \times_{\hat{\ZZ}} \Hom(\mc{P}_p, \mathbb{G}_m)$) can be argued similarly.
\item To show a homomorphism $\Pi \to H(\ol{\QQ}_{\lambda})$ is plain (respectively, weakly motivic) of characteristic $p$, it suffices to know $\rho_{\lambda}(\Frob_x)$ has plain (or $\# \kappa(x)$-Weil numbers, of possibly different weights) in a faithful representation for a single closed point $x$: compare \cite[Proposition 4.3]{chin:indl}. This can be seen from the sketch in item (1), or in terms of our proof by considering the restriction of $\rho_{\lambda}$ to the subgroup $1 \times \Hom(\mc{P}_p/\mu_{\infty}, \mathbb{G}_m) \xrightarrow{\rho_{\lambda}} Z_{H^0}^0$: up to roots of unity, this homomorphism is determined by what it does on a single Frobenius element.
\item Since $\mc{P}_p/\mu_{\infty}$ is a $\QQ$-vector space, the ``plain" case of the Lemma can be deduced directly from the criterion of \cite[Lemma 3.5.1]{drinfeld:pross}, as Drinfeld does for $\Pired$. Note that $\mc{W}_p/\mu_{\infty}$ is not a $\QQ$-vector space, so in this case arguing as in Lemma \ref{piplainlem} seems to be necessary.
\end{enumerate}
\end{rmk}

Drinfeld's main theorem (\cite[Theorem 1.4.1]{drinfeld:pross}, and see \cite[Theorem 3.8]{huryn-kedlaya-klevdal-patrikis} for the details when $\lambda \vert p$) gives for all $\lambda$ and $\lambda'$ a unique isomorphism $\hat{\Pi}_{(\lambda)} \simeq \hat{\Pi}_{(\lambda')}$ in $\Pross(\ol{\QQ})$ (the category of pro-semisimple groups, with homomorphisms taken up to conjugation by the identity component) preserving the canonical diagrams of Equation \eqref{canonical}. He thus obtains an object $\hat{\Pi}$ of $\Pross(\ol{\QQ})$ that is unique up to a unique isomorphism. 
\begin{cor}\label{plainindl}
For all $\lambda$ and $\lambda'$ there is a unique isomorphism $\hat{\Pi}_{(\lambda)}^{\mr{plain}} \xrightarrow{\sim} \hat{\Pi}_{(\lambda')}^{\mr{plain}}$ sending the canonical diagram
\begin{equation}\label{canonicalplain}
\Pi_{\Frob} \to [\hat{\Pi}_{(\lambda)}^{\mr{plain}} \git \hat{\Pi}_{(\lambda)}^{\mr{plain}, 0}](\ol{\QQ}) \to \Pi
\end{equation}
to the corresponding diagram with $\lambda'$ in place of $\lambda$. Similarly, there is a unique such isomorphism $\hat{\Pi}_{(\lambda)}^{\mr{mot}} \xrightarrow{\sim} \hat{\Pi}_{(\lambda')}^{\mr{mot}}$.
\end{cor}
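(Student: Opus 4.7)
The plan is to reduce Corollary \ref{plainindl} to Drinfeld's theorem for the semisimple quotients together with the explicit product decomposition of Lemma \ref{piplainlem}. Descending the isomorphism of that lemma to $\ol{\QQ}$ gives
\[
\hat{\Pi}_{(\lambda)}^{\mr{plain}} \xrightarrow{\sim} \hat{\Pi}_{(\lambda)} \times_{\hat{\ZZ}} \Hom(\mc{P}_p, \mathbb{G}_m),
\]
and similarly with $\mc{W}_p$ in the motivic case. The torus factor on the right is a pro-torus over $\ol{\QQ}$ that makes no reference to $\lambda$, and its projection to $\hat{\ZZ}$, which classifies the ``degree'' character $\Frob_{\tilde{x}} \mapsto \#\kappa(x) = p^{[\kappa(x):\mathbb{F}_p]}$, is manifestly $\lambda$-independent.

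I would first construct the desired isomorphism by composing the $\lambda$-side isomorphism of Lemma \ref{piplainlem}, Drinfeld's canonical isomorphism $\hat{\Pi}_{(\lambda)} \xrightarrow{\sim} \hat{\Pi}_{(\lambda')}$ on the pro-semisimple factor tensored with the identity on $\Hom(\mc{P}_p,\mathbb{G}_m)$, and the inverse of the $\lambda'$-side isomorphism of Lemma \ref{piplainlem}. These glue because Drinfeld's isomorphism is compatible with the projection to $\hat{\ZZ}$ (it preserves the canonical diagram \eqref{canonical}), and the torus factor's projection to $\hat{\ZZ}$ is $\lambda$-independent by construction. Preservation of the canonical diagram \eqref{canonicalplain} decomposes into the assertion for each factor of the fiber product: for the semisimple piece it is Drinfeld's theorem, while for the torus piece it is the tautological fact that $\Frob_{\tilde{x}}^n$ maps to the element $\alpha \mapsto \alpha^{n \cdot \#\kappa(x)}$ of $\Hom(\mc{P}_p, \mathbb{G}_m)(\ol{\QQ})$ in a way independent of $\lambda$.

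For uniqueness, suppose we have any isomorphism in $\Pross(\ol{\QQ})$ (up to inner automorphism by the identity component) between $\hat{\Pi}_{(\lambda)}^{\mr{plain}}$ and $\hat{\Pi}_{(\lambda')}^{\mr{plain}}$ carrying the canonical diagram to itself. Passing to the maximal pro-semisimple quotient gives an isomorphism $\hat{\Pi}_{(\lambda)} \xrightarrow{\sim} \hat{\Pi}_{(\lambda')}$ that preserves the diagram \eqref{canonical}, hence by Drinfeld's theorem must be the canonical one. Passing to the connected center, which under Lemma \ref{piplainlem} is identified with $\Hom(\mc{P}_p/\mu_\infty, \mathbb{G}_m)$, the preservation of the image of $\Pi_{\Frob}$ pins down the map as explained in item (2) of the preceding Remark: the $\ol{\QQ}$-torus character group $\mc{P}_p/\mu_\infty$ is a $\QQ$-vector space, and the automorphism is determined by its action on a single Frobenius image (up to torsion, which is forced to be trivial by the choice of lifts). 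The motivic case is identical, once one replaces $\mc{P}_p$ by $\mc{W}_p$ throughout.

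I do not foresee a serious obstacle: the content of the corollary is almost formal given Lemma \ref{piplainlem} and Drinfeld's theorem. The one point that requires care is checking that the product decomposition of Lemma \ref{piplainlem} genuinely descends to $\ol{\QQ}$ in a way compatible with Drinfeld's descent of $\hat{\Pi}_\lambda$ to $\hat{\Pi}_{(\lambda)}$, so that the two factors can be transported independently; this is handled by noting that the ``degree'' map to $\hat{\ZZ}$ is built into the canonical diagram \eqref{canonical} which Drinfeld's theorem preserves.
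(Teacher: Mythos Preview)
Your existence argument is essentially identical to the paper's: use Lemma \ref{piplainlem} to write $\hat{\Pi}_{(\lambda)}^{\mr{plain}}$ as a fiber product, apply Drinfeld's isomorphism on the pro-semisimple factor and the identity on $\Hom(\mc{P}_p,\mathbb{G}_m)$, and observe that the gluing over $\hat{\ZZ}$ is compatible because Drinfeld's isomorphism preserves \eqref{canonical}.

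Your uniqueness argument, however, is framed in a way that does not quite close. You reduce to the semisimple quotient correctly via Drinfeld's uniqueness, but then you ``pass to the connected center'' and assert that the Frobenius images pin down the map there. This is not the right object to look at: once the induced map on $\hat{\Pi}_{\lambda}$ is the identity, the residual freedom is a homomorphism $\chi \colon \Piplain \to Z^{\mr{plain},0}$ (so that $\alpha = \mr{id}\cdot\chi$), and it is this $\chi$, not the restriction of $\alpha$ to the center, that must be shown to vanish. Knowing $\alpha$ is the identity on both the semisimple quotient and the connected center only forces $\chi$ to factor through $\pi_0 = \Pi$, and you would still need a separate argument to kill that. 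Your reference to item (2) of the Remark is also off: that item concerns detecting plainness from one Frobenius, not rigidifying automorphisms.

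The paper's argument is more direct: it observes that $Z^{\mr{plain},0} \cong \Hom(\mc{P}_p/\mu_\infty,\mathbb{G}_m)$ injects into the GIT quotient $[\Piplain \git \hat{\Pi}_{\lambda}^{\mr{plain},0}] \cong [\hat{\Pi}_\lambda \git \hat{\Pi}_\lambda^0] \times_{\hat{\ZZ}} \Hom(\mc{P}_p,\mathbb{G}_m)$ as the kernel of the projection to $\hat{\ZZ}$. The action of $\alpha = \mr{id}\cdot\chi$ on this GIT quotient is then translation by (the image of) $\chi$, so preservation of the canonical diagram together with \v{C}ebotarev forces $\chi = 1$ directly. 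You should replace your ``pass to the center'' step with this injection argument.
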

\begin{proof}
Under the isomorphism of Lemma \ref{piplainlem}, the isomorphism $\hat{\Pi}^{\mr{plain}}_{(\lambda)} \xrightarrow{\sim} \hat{\Pi}^{\mr{plain}}_{(\lambda')}$ is induced by the unique isomorphism $\hat{\Pi}_{(\lambda)} \xrightarrow{\sim} \hat{\Pi}_{(\lambda')}$ preserving Equation \eqref{canonical} and the identity on $\Hom(\mc{P}_p, \mathbb{G}_m)$. For the uniqueness, it suffices (by the \v{C}ebotarev density theorem and the uniqueness in \cite[Theorem 1.4.1]{drinfeld:pross}) to check that an (on the nose) automorphism $\alpha$ of $\hat{\Pi}^{\mr{plain}}_{\lambda}$ that is trivial on $[\Piplain \git \hat{\Pi}_{\lambda}^{\mr{plain}, 0}]$ and induces the identity on the pro-semisimple quotient $\hat{\Pi}_{\lambda}$ is the identity. Since $\alpha$ induces the identity on $\hat{\Pi}_{\lambda}$, it has the form $\alpha= \mr{id} \cdot \chi$ for some homomorphism $\chi \colon \Piplain \to Z^{\mr{plain}, 0} \cong \Hom(\mc{P}_p/\mu_{\infty}, \mathbb{G}_m)$. Since 
\[
[\Piplain \git \hat{\Pi}_{\lambda}^{\mr{plain}, 0}] \cong [\hat{\Pi}_{\lambda} \git \hat{\Pi}_{\lambda}^0] \times_{\hat{\ZZ}} \Hom(\mc{P}_p, \mathbb{G}_m),
\]
and $Z^{\mr{plain}, 0}$ injects into the right hand side (as $\ker(\Hom(\mc{P}_p, \mathbb{G}_m) \to \hat{\ZZ})$), we must have $\chi=1$, as needed.
\end{proof}
As in \cite[\S 6]{drinfeld:pross}, we can then define groups over $\ol{\QQ}$, $\hat{\Pi}^{\mr{plain}}= \hat{\Pi} \times_{\hat{\ZZ}} \Hom(\mc{P}_p, \mathbb{G}_m)$ and $\hat{\Pi}^{\mr{mot}}= \hat{\Pi} \times_{\hat{\ZZ}} \Hom(\mc{W}_p, \mathbb{G}_m)$: these are objects of $\Prored(\ol{\QQ})$ defined up to unique isomorphism, and for all $\lambda$, $\hat{\Pi}^{\mr{plain}} \times_{\ol{\QQ}} \ol{\QQ}_{\lambda}$ is isomorphic, uniquely up to conjugation by $\hat{\Pi}_{\lambda}^{\mr{plain}, 0}$, to $\hat{\Pi}_{\lambda}^{\mr{plain}}$ (and likewise for $\hat{\Pi}^{\mr{mot}}$).

We will apply the above conclusions in the following simplified form:
\begin{cor}\label{companions}
Let $\rho_{\lambda} \colon \Piplain \to H_{\ol{\QQ}_{\lambda}}$ be a surjection onto a reductive (not necessarily connected) group $H/ \ol{\QQ}$. 
Then $\rho_{\lambda}$ has a strong $\lambda'$-companion $\rho_{\lambda \leadsto \lambda'}$, also plain, for any finite place $\lambda'$ of $\ol{\QQ}$. In particular:
\begin{itemize} 
\item $H$ is the image of the homomorphism $\hat{\Pi}^{\mr{plain}}_{\lambda'} \to H_{\ol{\QQ}_{\lambda'}}$ classifying $\rho_{\lambda \leadsto \lambda'}$ 
\item For any closed point $x \in X$, the associated conjugacy class $\rho_{\lambda}(\Frob_x) \in [H \git H](\ol{\QQ}_{\lambda})$ is $\ol{\QQ}$-rational and equal to the associated conjugacy class $\rho_{\lambda \leadsto \lambda'}(\Frob_x)$.
\end{itemize}
Here we write $\rho_{\lambda}(\Frob_x)$ for the conjugacy class of the $\kappa(x)$-linearized Frobenius associated to the closed point, whether we are in the isocrystal or the lisse sheaf situation.
\end{cor}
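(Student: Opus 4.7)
The corollary is a formal consequence of Corollary \ref{plainindl} combined with a descent step from $\ol{\QQ}_{\lambda}$ to $\ol{\QQ}$. My plan is to (i) descend $\rho_{\lambda}$ to a surjection $\rho \colon \hat{\Pi}^{\mr{plain}} \to H$ of pro-reductive groups over $\ol{\QQ}$; (ii) for each finite place $\lambda'$, base-change $\rho$ along the canonical isomorphism $\hat{\Pi}^{\mr{plain}}_{\lambda'} \cong \hat{\Pi}^{\mr{plain}} \times_{\ol{\QQ}} \ol{\QQ}_{\lambda'}$ to define $\rho_{\lambda \leadsto \lambda'}$; and (iii) extract the asserted compatibilities from the preservation of the canonical diagram \eqref{canonicalplain}.

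For step (i), the canonical isomorphism $\hat{\Pi}^{\mr{plain}}_{\lambda} \cong \hat{\Pi}^{\mr{plain}} \times_{\ol{\QQ}} \ol{\QQ}_{\lambda}$ (up to $\hat{\Pi}^{\mr{plain}, 0}_{\lambda}$-conjugation, as recorded immediately after Corollary \ref{plainindl}) puts reductive quotients of $\hat{\Pi}^{\mr{plain}}_{\lambda}$ in bijective correspondence with base changes of reductive quotients of $\hat{\Pi}^{\mr{plain}}$. Since $H/\ol{\QQ}$ is reductive and $\ol{\QQ}$ is algebraically closed, the quotient of $\hat{\Pi}^{\mr{plain}}_{\lambda}$ cut out by $\ker(\rho_{\lambda})$ descends to a reductive quotient of $\hat{\Pi}^{\mr{plain}}$ identified with $H$ (up to automorphism of $H$, which does not affect the conclusions below). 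This yields $\rho$, unique up to $\hat{\Pi}^{\mr{plain}, 0}$-conjugation, whose base change to $\ol{\QQ}_{\lambda}$ recovers $\rho_{\lambda}$. The argument is the pro-reductive analogue of Drinfeld's pro-semisimple descent, using the same Tannakian input.

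Steps (ii) and (iii) are then formal. Surjectivity of $\rho_{\lambda \leadsto \lambda'}$ onto $H_{\ol{\QQ}_{\lambda'}}$ follows because base change from $\ol{\QQ}$ preserves image, so $H$ is indeed the image. Preservation of \eqref{canonicalplain} under the isomorphism of Corollary \ref{plainindl} implies that $\rho(\Frob_x) \in [H \git H^0](\ol{\QQ})$ base-changes to both $\rho_{\lambda}(\Frob_x)$ and $\rho_{\lambda \leadsto \lambda'}(\Frob_x)$ for every closed point $x \in |X|$. Composing with the natural surjection $[H \git H^0](\ol{\QQ}) \twoheadrightarrow [H \git H](\ol{\QQ})$ yields the common $\ol{\QQ}$-rational class in $[H \git H]$. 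Strongness of the companion is built into the construction: both $\rho_{\lambda}$ and $\rho_{\lambda \leadsto \lambda'}$ arise by base change from the same $\rho$ over $\ol{\QQ}$, so the compatibility of the entire canonical diagrams, not merely Frobenius values, is automatic.

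The only substantive point is the descent in (i); the remaining steps are formal consequences of Corollary \ref{plainindl}. I do not anticipate serious obstacles, since the pro-reductive descent mirrors Drinfeld's pro-semisimple case, and the central torus factor in $\hat{\Pi}^{\mr{plain}} = \hat{\Pi} \times_{\hat{\ZZ}} \Hom(\mc{P}_p, \mathbb{G}_m)$ is already defined over $\ol{\QQ}$, so its contribution descends trivially.
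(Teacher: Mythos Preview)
Your proposal is correct and follows essentially the same approach as the paper: descend $\rho_{\lambda}$ along the isomorphism $\hat{\Pi}^{\mr{plain}}_{\lambda} \cong \hat{\Pi}^{\mr{plain}} \times_{\ol{\QQ}} \ol{\QQ}_{\lambda}$ to a surjection $\hat{\Pi}^{\mr{plain}} \twoheadrightarrow H$ over $\ol{\QQ}$ (unique up to $H^0$-conjugation), then base-change to $\ol{\QQ}_{\lambda'}$ via Corollary~\ref{plainindl}. The paper's proof is a single sentence to this effect; your steps (ii) and (iii) merely unpack what ``immediate from Corollary~\ref{plainindl}'' means, and your observation that one first lands in $[H \git H^0](\ol{\QQ})$ and then projects to $[H \git H](\ol{\QQ})$ is exactly the content of the paper's remark following the corollary.
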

\begin{proof}
This is immediate from Corollary \ref{plainindl}: descending $\rho_{\lambda}$ to $\ol{\QQ}$, we get $\hat{\Pi}^{\mr{plain}} \twoheadrightarrow H$ (unique up to $H^0$-conjugation), which in turn for any $\lambda'$ extends to the homomorphism $\hat{\Pi}_{\lambda'}^{\mr{plain}} \twoheadrightarrow H_{\ol{\QQ}_{\lambda'}}$ classifying the companion $\rho_{\lambda \leadsto \lambda'}$. 
\end{proof}
\begin{rmk}
Note we do not claim the conjugacy class $\rho_{\lambda}(\Frob_x)$ is independent of $\lambda$ as an element of $[H \git H^0](\ol{\QQ})$ (compare the assertion in Corollary \ref{plainindl}: indeed, this does not even make sense when only the closed point $x \in X$ rather than its lift $\tilde{x} \in \wt{X}$ is specified. The coarser nature of Corollary \ref{companions} allows us to transfer its conclusions to $\pi_1(X)$ without any ambiguity resulting from the non-canonical isomorphism $\pi_1(X) \cong \Pi$.
\end{rmk}

\section{Ramification}\label{ramsection}

In \cite[Proposition 2.3]{klevdal-patrikis:SVcompatiblearXiv}, we showed that Zariski-dense $G^{\mr{ad}}(\CC)$-valued representations $\rho$ of $\pi_1^{\mr{top}}(X(\CC))$ in the ``superrigid" (\textit{loc. cit.} Definition 2.2) regime have arithmetic descents with independent of $\ell$ ramification (the same would work for Zariski-dense cohomologically rigid representations with quasi-unipotent local monodromy and finite-order abelianization). Without Zariski-density, and  more precisely when the centralizer of $\rho$ is non-trivial, the argument breaks down, and indeed there is no reason that arbitrarily chosen arithmetic descents $\rho_{\lambda} \colon \pi_1(X_F) \to G(\ol{\QQ}_{\lambda})$ should have this property, since one can for varying $\lambda$ twist $\rho_{\lambda}$ by homomorphisms $\Gal(\ol{\QQ}/F) \to Z_G(\rho_{\lambda})(\ol{\QQ}_{\lambda})$ with different ramification sets (the basic case to keep in mind, even with $\rho^{\mr{ad}}$ Zariski-dense, is when this centralizer is the center of a non-adjoint group $G$). In this section (Lemma \ref{indram}), we show that having one number field specialization with independent of $\ell$ ramification is enough to show that a collection of descents also has this property.

Let $G$ be a connected reductive group; since we will work with sufficiently large coefficients, we assume $G$ is split over $\ZZ$. In this section we work with a smooth quasi-projective variety $X$ over a number field $F$ whose topological fundamental group $\Gamma= \pi_1^{\mr{top}}(X(\CC), \bar{x})$ (we will specify a base-point later) is $G^{\mr{ad}}$-superrigid in the sense of \cite[Definition 2.2]{klevdal-patrikis:SVcompatiblearXiv}: that is, for any algebraically closed field $\Omega$ of characteristic zero and any two Zariski-dense homomorphisms $\rho_1, \rho_2 \colon \Gamma \to G^{\mr{ad}}(\Omega)$, there is a $\tau \in \Aut(G^{\mr{ad}}_{\Omega})$ such that $\tau(\rho_1)=\rho_2$.\footnote{Alternatively, throughout this section we could take cohomologically rigid representations with bounded order of quasi-unipotency, without assuming superrigidity properties of $\Gamma$; somewhat longer versions of the arguments then apply using the technique of \cite{esnault-groechenig:rigid}, \cite{klevdal-patrikis:G-rigid}.} We fix once and for all a smooth projective compactification $X \subset \ol{X}$ with $\ol{X} \setminus X$ a strict normal crossings divisor. We further let $N$ be a large enough integer that the setup $(X, \ol{X}, \ol{X} \setminus X)$ spreads out to a relative strict normal crossings divisor over $\mc{O}_{F}[1/N]$, and we choose some $x \in X(\mc{O}_{F'}[1/N])$ with $F'/F$ a finite extension.

For a fixed finite subgroup $Z \subset Z_G$, which we may assume contains $Z_{G^{\mr{der}}}$, let $\mc{R}_Z$ be the set of $G(\CC)$-conjugacy classes of homomorphisms $\rho \colon \Gamma \to G^{\mr{der}}Z(\CC)$ such that $\rho^{\mr{ad}}$ is Zariski-dense in $G^{\mr{ad}}$ (fixing $Z$ amounts to bounding the abelianization). Set $G_Z= G^{\mr{der}} \cdot Z$. Since $\Gamma$ is $G^{\mr{ad}}$-superrigid, $\mc{R}_Z$ is finite. As in \cite{esnault-groechenig:rigid}, \cite{klevdal-patrikis:G-rigid}, all members of $\mc{R}_Z$ are integral, so we may choose representatives such that each $\rho \in \mc{R}_Z$ factors through $G(\mc{O}_L)$ for some number field $L \subset \ol{\QQ}$. In turn, for each place $\lambda$ of $\ol{\QQ}$ we obtain the representations 
\[
\rho_{\lambda, \ol{\QQ}} \colon \pi_1(X_{\ol{\QQ}}, \bar{x}) \to G_Z(\ol{\QQ}_{\lambda}).
\]
\begin{lem}\label{tame}
There is an integer $N'$ such that for every $\rho \in \mc{R}_Z$, for every $\lambda$, and for every place $\bar{v}$ of $\ol{\QQ}$ not dividing $N'\ell$, $\rho_{\lambda, \ol{\QQ}}$ factors through the tame specialization map $\pi_1(X_{\ol{\QQ}}, \bar{x}) \xrightarrow{\mr{sp}} \pi_1^t(X_{\kappa(\bar{v})}, \bar{x}_v)$.
\end{lem}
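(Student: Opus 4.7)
The plan is to reduce the statement to the adjoint case, which is handled by \cite[Proposition 2.3]{klevdal-patrikis:SVcompatiblearXiv}, and then lift tameness from $G^{\ad}$ to $G_Z$ via the central extension $1 \to Z \to G_Z \to G^{\ad} \to 1$. This is an honest short exact sequence because $Z \supset Z_{G^{\der}}$, and its kernel $Z$ is a finite (in particular, finite étale over $\QQ$) group since $Z \subset Z_G$ is finite by construction. For each $\rho \in \mc{R}_Z$, the composition $\rho^{\ad} \colon \Gamma \to G^{\ad}(\CC)$ is Zariski-dense by the definition of $\mc{R}_Z$, and $\Gamma$ is $G^{\ad}$-superrigid by hypothesis, so the cited proposition supplies an integer $N_\rho^{\ad}$ such that for every $\lambda$ and every place $\bar v$ of $\ol{\QQ}$ with $\bar v \nmid N_\rho^{\ad} \ell$, the composite $\rho_{\lambda, \ol{\QQ}}^{\ad}$ factors through $\mr{sp} \colon \pi_1(X_{\ol{\QQ}}, \bar x) \to \pi_1^t(X_{\kappa(\bar v)}, \bar x_v)$. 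Equivalently, $\rho_{\lambda, \ol{\QQ}}^{\ad}$ vanishes on the kernel $P_{\bar v}$ of $\mr{sp}$, which in our good-reduction setup is generated by the wild (pro-$p$) inertia at each boundary component, where $p = \mr{char}\, \kappa(\bar v)$.

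Next I would lift the vanishing from $\rho_{\lambda, \ol{\QQ}}^{\ad}$ to $\rho_{\lambda, \ol{\QQ}}$ itself. Letting $\pi \colon G_Z \twoheadrightarrow G^{\ad}$ denote the quotient, we have $\rho_{\lambda, \ol{\QQ}}^{\ad} = \pi \circ \rho_{\lambda, \ol{\QQ}}$, so $\rho_{\lambda, \ol{\QQ}}(P_{\bar v}) \subset \ker(\pi)(\ol{\QQ}_\lambda) = Z(\ol{\QQ})$, a finite group of order $|Z|$. A continuous homomorphism from the pro-$p$ group $P_{\bar v}$ into the finite group $Z(\ol{\QQ})$ has image a $p$-subgroup of $Z(\ol{\QQ})$, which is trivial as soon as $p \nmid |Z|$. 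Therefore, for every $\bar v \nmid |Z| \cdot N_\rho^{\ad} \cdot \ell$, we have $\rho_{\lambda, \ol{\QQ}}|_{P_{\bar v}} = 1$, equivalently $\rho_{\lambda, \ol{\QQ}}$ factors through $\mr{sp}$ at $\bar v$.

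Finiteness of $\mc{R}_Z$ (which follows from the $G^{\ad}$-superrigidity of $\Gamma$) finally permits a uniform choice: set
\[
N' := |Z| \cdot \mr{lcm}_{\rho \in \mc{R}_Z} N_\rho^{\ad}.
\]
By the previous two steps this integer, which depends on neither $\rho$, nor $\lambda$, nor $\bar v$, satisfies the conclusion of the lemma. The only non-formal ingredient is the adjoint case of \cite[Proposition 2.3]{klevdal-patrikis:SVcompatiblearXiv}; the central-extension lift is elementary, relying only on the observation that there are no nontrivial continuous homomorphisms from a pro-$p$ group to a finite prime-to-$p$ group. The main conceptual obstacle is therefore already absorbed into the adjoint result, which is why the present argument can proceed so directly — by contrast, the analogue with a non-finite central kernel (i.e., $G$ in place of $G_Z$) requires the more delicate input of one number field specialization supplied in Lemma \ref{indram}.
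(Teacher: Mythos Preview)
Your reduction to the adjoint case via \cite[Proposition 2.3]{klevdal-patrikis:SVcompatiblearXiv} is reasonable, but the lifting step has a real gap. You assert that $P_{\bar v} := \ker\bigl(\pi_1(X_{\ol{\QQ}}) \xrightarrow{\mr{sp}} \pi_1^t(X_{\kappa(\bar v)})\bigr)$ is pro-$p$, ``generated by the wild (pro-$p$) inertia at each boundary component.'' This is not what specialization gives: Grothendieck's theorem says only that $\mr{sp}$ induces an isomorphism on maximal prime-to-$p$ quotients, which implies $P_{\bar v} \subset \ker\bigl(\pi_1(X_{\ol{\QQ}}) \to \pi_1(X_{\ol{\QQ}})^{(p')}\bigr)$ but \emph{not} that $P_{\bar v}$ is pro-$p$. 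The implication already fails for finite groups: $A_5 \to 1$ with $p=2$ induces an isomorphism on prime-to-$2$ quotients, yet the kernel $A_5$ is not a $2$-group. You seem to be conflating $P_{\bar v}$ with $\ker\bigl(\pi_1(X_{\ol{\kappa(v)}}) \to \pi_1^t(X_{\ol{\kappa(v)}})\bigr)$, which \emph{is} normally generated by pro-$p$ wild inertia; but that is a different map, internal to characteristic $p$, whereas here we are comparing the characteristic-zero geometric $\pi_1$ to the characteristic-$p$ tame one.

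Consequently, from $\rho_{\lambda, \ol{\QQ}}(P_{\bar v}) \subset Z$ you cannot conclude $\rho_{\lambda, \ol{\QQ}}(P_{\bar v}) = 1$ merely from $p \nmid |Z|$. A salvage would require more: since $Z$ is central, the restriction $P_{\bar v} \to Z$ is $\pi_1^t$-invariant, and the inflation--restriction sequence (using $H^1(\pi_1^t, Z) \xrightarrow{\sim} H^1(\pi_1(X_{\ol{\QQ}}), Z)$, both being $\Hom(\pi_1^{(p')}, Z)$) identifies its vanishing with injectivity of $H^2(\pi_1^t(X_{\kappa(\bar v)}), Z) \to H^2(\pi_1(X_{\ol{\QQ}}), Z)$, which is not obvious for general $X$. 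The paper avoids this entirely by a different mechanism: for one $\lambda$ with good reduction at $v$ it forms Drinfeld's $\lambda'$-companion of $\rho_{\lambda, v}$ on the special fiber, pulls it back via tame specialization to an element of $\mc{R}_Z$, and shows the resulting self-map of the finite set $\mc{R}_Z$ is injective (hence bijective). Thus every element of $\mc{R}_Z$, viewed $\lambda'$-adically, arises as a pullback from $\pi_1^t(X_{\kappa(\bar v)})$, which is exactly the desired tame factorization for $\lambda'$.
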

\begin{proof}
For each $\lambda$, there is a finite extension $F(\lambda)/F$ such that for all $\rho \in \mc{R}_Z$, $\rho_{\lambda, \ol{\QQ}}$ descends to a continuous homomorphism
\[
\rho_{\lambda, Z} \colon \pi_1(X_{F(\lambda)}, \bar{x}) \to G_Z(\ol{\QQ}_{\lambda}).\footnote{This may be a bad descent for arithmetic purposes, e.g. not de Rham, but for the present geometric argument it suffices.}
\]
Indeed, this follows from a standard argument (variant of \cite[Theorem 4]{simpson:higgs}, \cite{esnault-groechenig:rigid}, \cite{klevdal-patrikis:G-rigid}): the outer action of $\Gal(\ol{\QQ}/F)$ on $\pi_1(X_{\ol{\QQ}}, \bar{x})$ induces a discrete action on the finite set $\mc{R}_Z$, so there is an open subgroup $H \subset \Gal(\ol{\QQ}/F)$ such that $H$ fixes each $\rho_{\lambda, \ol{\QQ}}$ (up to isomorphism). The adjoint descent $\rho_{\lambda}^{\mr{ad}}$ exists uniquely (using Zariski-density: see \cite[Lemma 2.1]{klevdal-patrikis:SVcompatiblearXiv}) over $F(\lambda)^{\mr{ad}}:= \ol{\QQ}^{H}$. As we vary $\rho$, the $\rho_{\lambda}^{\mr{ad}}$ all factor through $G^{\mr{ad}}(\mc{O})$ for $\mc{O}$ the ring of integers in a finite extension of $\QQ_{\ell}$, and $G_Z(\mc{O}) \to G^{\mr{ad}}(\mc{O})$ is on an open subgroup of the source an isomorphism onto its image (an open subgroup of the target). Therefore after restricting to a finite extension $F(\lambda)/F(\lambda)^{\mr{ad}}$, each $\rho_{\lambda}^{\mr{ad}}$ lifts to
\[
\rho_{\lambda} \colon \pi_1(X_{F(\lambda)}, \bar{x}) \to G_Z(\mc{O}).
\]
Each $\rho_{\lambda}$ factors through $\pi_1(X_{\mc{O}_{F(\lambda)}[1/N_{\lambda}]}, \bar{x})$ for an integer $N_{\lambda}$ (which we may assume divisible by $\ell$) that depends on $\lambda$ but not on $\rho \in \mc{R}_Z$. As in \cite[Proposition 2.3]{klevdal-patrikis:SVcompatiblearXiv}, set $N'= \mr{gcd}(N_{\lambda})$, and suppose that $p$ does not divide $N'$. We claim that for all $\lambda' \nmid p$, $\rho_{\lambda', \ol{\QQ}}$ factors via the tame specialization map through $\pi_1^t(X_{\ol{\kappa(v)}}, \bar{x}_v)$ for every $v \vert p$. Suppose that $p$ divides $N_{\lambda'}$ (else the assertion is clear). For some $\lambda \neq \lambda'$, $p$ does not divide $N_{\lambda}$. Let $\bar{v}$ be any place of $\ol{\QQ}$ above $p$, and let $v= \bar{v}|_{F(\lambda)}$, so for each $\rho \in \mc{R}_Z$ we have the (tame: see \cite[Footnote 3]{klevdal-patrikis:SVcompatiblearXiv}) restriction to the special fiber
\[
\rho_{\lambda, v} \colon \pi_1(X_{\kappa(v)}, \bar{x}_v) \to G_Z(\ol{\QQ}_{\lambda}).
\]
By \cite[Theorem 1.4.1]{drinfeld:pross}, $\rho_{\lambda, v}$ has a (strong) $\lambda'$-companion 
\[
\rho_{\lambda \leadsto \lambda', v} \colon \pi_1(X_{\kappa(v)}, \bar{x}_v) \to G_Z(\ol{\QQ}_{\lambda'})
\]
with the ``same" algebraic monodromy group as $\rho_{\lambda, v}$. (We don't assert this is all of $G_Z$; it is some group between $G^{\mr{der}}$ and $G_Z$.) Since $\rho_{\lambda \leadsto \lambda', v}$ is also tamely ramified, we can as in \cite{esnault-groechenig:rigid}, \cite{klevdal-patrikis:G-rigid} pull back along the tame specialization map to a homomorphism
\[
\rho_{\lambda \leadsto \lambda', \bar{v}}^{\mr{top}} \colon \pi_1^{\mr{top}}(X(\CC), \bar{x}) \to G_Z(\ol{\QQ}_{\lambda'}),
\]
which clearly lies in $\mc{R}_Z$.\footnote{Had we not assumed superrigidity and instead assumed the elements of $\mc{R}_Z$ to be cohomologically rigid, this conclusion would still be valid but would require the arguments of \cite{esnault-groechenig:rigid} and \cite{klevdal-patrikis:G-rigid}.} We may view the elements of $\mc{R}_Z$ as $\ol{\QQ}_{\lambda'}$-valued using that they all have representatives factoring through $G(\mc{O}_L) \subset G(\ol{\QQ}_{\lambda'})$, and we claim that the map $\mc{R}_Z \to \mc{R}_Z$, $\rho \mapsto \rho_{\lambda \leadsto \lambda', \bar{v}}^{\mr{top}}$, is a bijection. Indeed, let $\rho$ and $\sigma$ be elements of $\mc{R}_Z$ such that $\rho_{\lambda \leadsto \lambda', \bar{v}}^{\mr{top}} = \sigma_{\lambda \leadsto \lambda', \bar{v}}^{\mr{top}}$ (equality up to $G$-conjugacy). Then $\rho_{\lambda \leadsto \lambda', \bar{v}}= \sigma_{\lambda \leadsto \lambda' \bar{v}}$. Let $F_v \in \pi_1(X_{\kappa(v)}, \bar{x}_v)$ be the $v$-Frobenius induced by the section ($\kappa(v)$-rational point) $x_v$. The equality $\rho_{\lambda \leadsto \lambda', \bar{v}}(F_v \gamma F_v^{-1})= \sigma_{\lambda \leadsto \lambda', \bar{v}}(F_v \gamma F_v^{-1})$, for all $\gamma \in \pi_1(X_{\ol{\kappa(v)}}, \bar{x}_v)$ implies that $\sigma_{\lambda \leadsto \lambda', v}(F_v)^{-1} \rho_{\lambda \leadsto \lambda', v}(F_v)$ centralizes the image of $\rho_{\lambda \leadsto \lambda', \bar{v}}=\sigma_{\lambda \leadsto \lambda', \bar{v}}$, hence lies in $Z$. As $Z$ is finite and central, passing to a finite extension $\kappa'/\kappa(v)$, we see $\sigma_{\lambda \leadsto \lambda', v}|_{X_{\kappa'}}=\rho_{\lambda \leadsto \lambda', v}|_{X_{\kappa'}}$. Applying \cite[Theorem 1.4.1]{drinfeld:pross} again shows that $\sigma_{\lambda, v}|_{X_{\kappa'}}= \rho_{\lambda, v}|_{X_{\kappa'}}$, so $\sigma_{\lambda, \bar{v}}= \rho_{\lambda, \bar{v}}$, and finally $\sigma= \rho$.

\end{proof}

\begin{lem}\label{indram}
Assume that for some $\rho \in \mc{R}_Z$, there are for all $\lambda$ descents $\rho_{\lambda}$ as above such that
\begin{enumerate}
\item The fields $F(\lambda)$ are independent of $\lambda$.
\item For our given rational point $x \in X(\mc{O}_{F'}[1/N])$, there is an integer $N_x$ (divisible by $N$) such that the specialization $\rho_{\lambda, x}$ is unramified outside $N_x \ell$ for all places $\lambda$. 
\end{enumerate}
Then there is an integer $N''$ such that for every $\lambda$, $\rho_{\lambda}$ factors through $\pi_1(X_{\mc{O}_F[1/N'' \ell]}, \bar{x})$.
\end{lem}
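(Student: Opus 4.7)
The plan is to exploit the section $s_x$ to split the homotopy exact sequence over the common arithmetic descent field, and then control the ramification of $\rho_\lambda$ at each prime $v$ separately on its geometric and arithmetic pieces.

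Let $F^* := F(\lambda)$ denote the common field from hypothesis (1); after enlarging $F$ (and $N$) if necessary, assume $F^* \supset F'$, so that $x$ defines an $\mc{O}_{F^*}[1/N]$-point of $X$. The section then induces a splitting
\[
s_{x,*} \colon \Gal(\ol{F^*}/F^*) \to \pi_1(X_{F^*}, \bar x)
\]
of the homotopy exact sequence, so each $\rho_\lambda$ is completely determined by the pair $(\rho_{\lambda,\ol{\QQ}}, \rho_{\lambda, x})$, where $\rho_{\lambda, x} := \rho_\lambda \circ s_{x,*}$ is the specialization at $x$.

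Let $N''$ be an integer divisible by $N$, $N'$, $N_x$, and by every rational prime over which $F^*/F$ ramifies. Fix a prime $v$ of $F^*$ with $v \nmid N''\ell$. Since $v \nmid N$, both $X$ and $s_x$ spread out over $\mc{O}_{F^*,(v)}$, producing an analogous splitting
\[
\pi_1(X_{\mc{O}_{F^*,(v)}}, \bar x) \;\cong\; \pi_1(X_{\mc{O}_{F^*,(v)}^{\mr{sh}}}, \bar x) \rtimes \Gal(\kappa(v)^s/\kappa(v)),
\]
and the natural restriction $\pi_1(X_{F^*}) \twoheadrightarrow \pi_1(X_{\mc{O}_{F^*,(v)}})$ intertwines the two splittings: on the geometric piece it is the specialization $\mr{sp}_v \colon \pi_1(X_{\ol{F^*}}) \twoheadrightarrow \pi_1(X_{\mc{O}_{F^*,(v)}^{\mr{sh}}})$, and on the arithmetic piece it is the projection $\Gal(\ol{F^*}/F^*) \twoheadrightarrow \Gal(\kappa(v)^s/\kappa(v))$ by the inertia subgroup $I_v$. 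Consequently, $\rho_\lambda$ extends over $v$ if and only if (a) its geometric piece $\rho_{\lambda, \ol{\QQ}}$ factors through $\mr{sp}_v$, and (b) its arithmetic piece $\rho_{\lambda, x}$ vanishes on $I_v$. Condition (a) is supplied by Lemma \ref{tame}, since the tame specialization $\pi_1(X_{\ol{F^*}}) \twoheadrightarrow \pi_1^t(X_{\kappa(\bar v)})$ through which $\rho_{\lambda, \ol{\QQ}}$ factors is itself a quotient of $\mr{sp}_v$; condition (b) follows from hypothesis (2), using that $v$ is unramified in $F^*/F'$ so that $I_v$ maps isomorphically onto the inertia at $v|_{F'}$. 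Varying $v$ outside $N''\ell$ yields the claimed factorization of $\rho_\lambda$ through $\pi_1(X_{\mc{O}_{F^*}[1/N''\ell]}, \bar x)$.

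The main technical obstacle is justifying the semidirect-product compatibility invoked above: that the kernel of $\pi_1(X_{F^*}) \twoheadrightarrow \pi_1(X_{\mc{O}_{F^*,(v)}})$ decomposes, via the splittings induced by $s_x$, as the kernel of $\mr{sp}_v$ on the geometric side and $s_{x,*}(I_v)$ on the arithmetic side. This rests on the smoothness of the spread-out model at $v \nmid N$, the compatibility of the spread-out section with both local splittings, and the identification of the tame geometric monodromy as a quotient of the local (Henselian) geometric fundamental group—all standard in the smooth SNC-compactified setting once the reduction to the pair $(\rho_{\lambda,\ol{\QQ}}, \rho_{\lambda, x})$ is carried out.
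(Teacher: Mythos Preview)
Your strategy is the same as the paper's: use the section $x$ to split the fundamental group into a geometric and an arithmetic piece, then check separately that the geometric piece factors through specialization (via Lemma~\ref{tame}) and that the arithmetic piece is unramified at $v$ (hypothesis~(2)).

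The imprecision is in \emph{where} you carry this out. Over the non-henselized localization $\mc{O}_{F^*,(v)}$ your displayed decomposition is not correct: there is no surjection $\Gal(\ol{F^*}/F^*) \twoheadrightarrow \Gal(\kappa(v)^s/\kappa(v))$ with kernel $I_v$ (that statement only makes sense for the local decomposition group at $v$), and $\pi_1(\Spec(\mc{O}_{F^*,(v)}))$ is the Galois group of the maximal extension of $F^*$ unramified above $v$, not $\hat{\ZZ}$. More to the point, the short exact sequence you invoke as ``standard'' is not available over a non-henselian base, because $\mc{O}^{\mr{sh}}$ is not a filtered colimit of finite \'etale $\mc{O}$-algebras, so the injectivity and middle-exactness arguments break down. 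The paper instead restricts to $X_{F^{\mr{h}}}$ for $\mc{O}^{\mr{h}}$ the \emph{henselization} of $\mc{O}_{F',(w)}$, proves the needed sequence $1 \to \pi_1(X_{\mc{O}^{\mr{sh}}}) \to \pi_1(X_{\mc{O}^{\mr{h}}}) \to \pi_1(\Spec(\mc{O}^{\mr{h}})) \to 1$ directly as Lemma~\ref{fundseq} (imitating the usual homotopy exact sequence over a field), and then descends the resulting \'etaleness from $\mc{O}^{\mr{h}}$ back to the global integral model by a limit and \'etale-descent argument spelled out in a footnote. With that adjustment---replace $\mc{O}_{F^*,(v)}$ by its henselization throughout and add the descent step at the end---your argument coincides with the paper's.
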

\begin{proof}
For notational simplicity we assume $F(\lambda)=F$ for all $\lambda$ (if this entails enlarging $F$, we can correspondingly enlarge $F'$). The point $x$ induces a splitting 
\[
\pi_1(X_{F'}, \bar{x}) \simeq \pi_1(X_{\ol{\QQ}}, \bar{x}) \rtimes \Gal(\ol{\QQ}/F').
\]
Let $N''$ be the product of the integer $N'$ from Lemma \ref{tame}, the primes (below those places) ramified in $F'/F$, and the integer $N_x$. Let $v \vert p$ be a place of $F$ such that $v$ does not divide $N'' \ell$. Let $w$ be a place of $F'$ above $v$. Let $\mc{O}^{\mr{h}}$ be the henselization of the local ring $\mc{O}_{F', {(w)}}$, let $F^{\mr{h}}$ be $\mr{Frac}(\mc{O}^{\mr{h}})$, let $\mc{O}^{\mr{sh}}$ the strict henselization of $\mc{O}_{F', {(w)}}$, and let $F^{\mr{sh}}= \mr{Frac}(\mc{O}^{\mr{sh}})$, all constructed as subrings of $\ol{\QQ}$.\footnote{We hope the notation $\mc{O}^{\mr{h}}$, etc., causes no confusion, since it does not refer to $F'$ and $w$; it instead has the advantage of being uncluttered. I think one should actually work over $\mc{O}$ the henselization of the local ring $\mc{O}_{F', (w)}$. The analysis below applies there, and this connects more easily to the global theory: to see a finite map with normal source $X' \to X_{\mc{O}_F'}[1/N]$ is \'etale above .} We will show that $\rho_{\lambda}|_{X_{F^{\mr{h}}}}$ factors through $\pi_1(X_{\mc{O}^{\mr{h}}})$. Restricting the splitting $x$ to
\[
\pi_1(X_{F^{\mr{h}}}, \bar{x}) \simeq \pi_1(X_{\ol{\QQ}}, \bar{x}) \rtimes \Gal(\ol{F}/F^{\mr{h}}),
\]
we obtain a factorization
\begin{equation}
\xymatrix{
\pi_1(X_{F^{\mr{h}}}, \bar{x}) \ar@/^2pc/[rr]^{\rho_{\lambda}} \ar@{->>}[r] & \pi_1(X_{\mc{O}^{\mr{sh}}}, \bar{x}) \rtimes \pi_1(\Spec(\mc{O}^{\mr{h}})) \ar@{-->}[r] & G(\ol{\QQ}_{\lambda}).
}
\end{equation}
Indeed, by Lemma \ref{tame} $\rho_{\lambda, \ol{\QQ}}$ factors through $\pi_1^t(X_{\ol{\kappa(v)}}, \bar{x}) \cong \pi_1(X_{\mc{O}^{\mr{sh}}}, \bar{x})$ (this isomorphism follows from tame specialization: see \cite[Corollary A.12]{lieblich-olsson-pi1} and \cite[Footnote 3]{klevdal-patrikis:SVcompatiblearXiv}), and $\rho_{\lambda, x}|_{F^{\mr{h}}}$ is unramified, giving the two components of the factorization. The restriction of $\rho_{\lambda}$ to $X_{F^h}$ therefore factors through $\pi_1(X_{\mc{O}^{\mr{h}}}, \bar{x})$, as this group is isomorphic to $\pi_1(X_{\mc{O}^{\mr{sh}}}, \bar{x}) \rtimes \pi_1(\Spec(\mc{O}^{\mr{h}}))$, compatibly with the canonical maps from $\pi_1(X_{F^{\mr{h}}}, \bar{x})$: this last point follows by Lemma \ref{fundseq}, an analogue over a henselian DVR of the exact sequence linking arithmetic and geometric fundamental groups. We are done.\footnote{Details: For each $r$, let $Y_r \to X[1/N_{\lambda}]$ be the finite \'etale covering corresponding to the mod $\lambda^r$ reduction over a suitable finite extension of $\QQ_{\ell}$, and let $\ol{Y}_r$ be the normalization of $X'= X[1/N_{\lambda}] \cup X_w$ in the function field of $Y_r$. We must check $\ol{Y}_r \to X'$ is \'etale in the fiber above $w$. This is clearly equivalent to checking $\ol{Y}_r \times \mc{O}_{(w)} \to X'_{\mc{O}_{(w)}}$ is \'etale. We are given that $\ol{Y}_r \times \mc{O}^{\mr{h}} \to X'_{\mc{O}^{\mr{h}}}$ is \'etale. Thus for some finite \'etale (since $\mc{O}_{(w)}$ is a DVR, \'etale neighborhoods of the special point are explicit) $\mc{O}_{(w)} \to \mc{O}'$, $\ol{Y}_r \times \mc{O}' \to X'_{\mc{O}'}$ is \'etale (by a limit argument). Thus $\ol{Y}_r \to X'$ is \'etale by \'etale descent.}

\end{proof}
\begin{lem}\label{fundseq}
Let $\mc{O}$ be a henselian DVR, and let $X \to \Spec(\mc{O})$ be a quasi-projective scheme such that $X_{\mc{O}^{\mr{sh}}}$ is connected. Let $\bar{x} \in X(\ol{K})$ be a geometric point with $K= \mr{Frac}(\mc{O})$. Then the canonical maps induce a short exact sequence
\[
1 \to \pi_1(X_{\mc{O}^{\mr{sh}}}, \bar{x}) \xrightarrow{\alpha} \pi_1(X, \bar{x}) \xrightarrow{\beta} \pi_1(\Spec(\mc{O}), \bar{x}) \to 1.
\]
\end{lem}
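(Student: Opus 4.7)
The plan is to verify the three non-trivial components of the exact sequence: surjectivity of $\beta$, injectivity of $\alpha$, and exactness in the middle ($\im(\alpha) = \ker(\beta)$); vanishing of $\beta \circ \alpha$ is immediate from $\pi_1(\Spec(\mc{O}^{\mr{sh}})) = 1$. For surjectivity of $\beta$, I would use that since $\mc{O}$ is henselian, any connected finite étale cover of $\Spec(\mc{O})$ has the form $\Spec(\mc{O}')$ for some finite étale $\mc{O}$-subalgebra $\mc{O}' \subset \mc{O}^{\mr{sh}}$; the pullback $X_{\mc{O}'}$ is then the continuous surjective image of the connected $X_{\mc{O}^{\mr{sh}}}$, hence connected, which is the usual criterion for $\beta$-surjectivity.

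For injectivity of $\alpha$, I would exploit the presentation $X_{\mc{O}^{\mr{sh}}} = \lim X_{\mc{O}'}$ as a cofiltered limit of $X$-schemes with affine transition morphisms, where $\mc{O}^{\mr{sh}} = \varinjlim \mc{O}'$ runs over finite étale $\mc{O}$-subalgebras. Any connected finite étale cover $Y^{\mr{sh}} \to X_{\mc{O}^{\mr{sh}}}$ descends by the standard spreading-out results (EGA IV \S 8) to a connected finite étale cover $Y' \to X_{\mc{O}'}$ for $\mc{O}'$ sufficiently large; composing with the finite étale map $X_{\mc{O}'} \to X$ exhibits $Y'$ as a finite étale cover of $X$ whose base change to $X_{\mc{O}^{\mr{sh}}}$ splits, via the isomorphism $\mc{O}' \otimes_{\mc{O}} \mc{O}^{\mr{sh}} \cong \prod_{\sigma \in \Gal(\mc{O}'/\mc{O})} \mc{O}^{\mr{sh}}$, into a disjoint union of $[\mc{O}' : \mc{O}]$ connected components, one of which is $Y^{\mr{sh}}$ itself. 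This realizes every open subgroup of $\pi_1(X_{\mc{O}^{\mr{sh}}})$ as the preimage under $\alpha$ of an open subgroup of $\pi_1(X)$, giving injectivity.

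The main obstacle is exactness in the middle, where $\im(\alpha) \subseteq \ker(\beta)$ is formal but the converse requires a Galois descent argument. A finite étale cover $Y \to X$ in $\ker(\beta)$ has trivial base change to $X_{\mc{O}^{\mr{sh}}}$, so there is an isomorphism $Y \times_X X_{\mc{O}^{\mr{sh}}} \cong X_{\mc{O}^{\mr{sh}}} \times F$ of $X_{\mc{O}^{\mr{sh}}}$-schemes, with $F = Y_{\bar{x}}$ the finite fiber. Spreading out, this trivialization exists over some finite Galois $\mc{O}' \subset \mc{O}^{\mr{sh}}$, yielding $Y_{\mc{O}'} \cong X_{\mc{O}'} \times F$ as $X$-schemes. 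The $\Gal(\mc{O}'/\mc{O})$-action on $Y_{\mc{O}'}$ preserves this product decomposition (since the action on $X_{\mc{O}'}$ is that induced from $\mc{O}'/\mc{O}$) and hence induces an action on $F$; applying Galois descent for finite étale covers of $\Spec(\mc{O})$ turns the $\Gal(\mc{O}'/\mc{O})$-set $F$ into a finite étale $T \to \Spec(\mc{O})$, and unwinding the descent yields $Y \cong X \times_{\Spec(\mc{O})} T$, completing the verification.
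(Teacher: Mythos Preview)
Your arguments for surjectivity of $\beta$ and injectivity of $\alpha$ are correct and essentially match the paper's. The gap is in exactness in the middle. What your Galois descent argument actually establishes is: every finite \'etale $Y \to X$ whose base change to $X_{\mc{O}^{\mr{sh}}}$ is \emph{completely split} is pulled back from $\Spec(\mc{O})$. In group-theoretic terms, this says that every open \emph{normal} subgroup of $\pi_1(X)$ containing $\im(\alpha)$ also contains $\ker(\beta)$, i.e.\ $\ker(\beta)$ lies in the topological normal closure of $\im(\alpha)$. Together with $\im(\alpha) \subseteq \ker(\beta)$ and normality of $\ker(\beta)$, this yields only $\ker(\beta) = \overline{\im(\alpha)}^{\mr{normal}}$, not $\ker(\beta) = \im(\alpha)$. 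Your phrase ``a finite \'etale cover $Y \to X$ in $\ker(\beta)$ has trivial base change to $X_{\mc{O}^{\mr{sh}}}$'' elides exactly this point: for a connected $Y$ corresponding to an open subgroup $V \supseteq \im(\alpha)$, the base change $Y_{\mc{O}^{\mr{sh}}}$ a priori only admits a \emph{section}, and ``has a section'' does not imply ``completely split'' without knowing $\im(\alpha)$ is normal.

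The paper, following the Stacks Project treatment of the classical homotopy exact sequence, supplies this normality as a separate step: given connected finite \'etale $U \to X$ with a section $s$ of $U_{\mc{O}^{\mr{sh}}} \to X_{\mc{O}^{\mr{sh}}}$, descend $s$ to some finite \'etale $\mc{O}' \subset \mc{O}^{\mr{sh}}$, form the Galois-stable open-and-closed subscheme $V' = \bigcup_{\sigma \in \Gal(K'/K)} s^{\sigma}(X_{\mc{O}'}) \subseteq U_{\mc{O}'}$, descend $V'$ to an open-and-closed $V \subseteq U$, and use connectedness of $U$ to conclude $V = U$; hence $U_{\mc{O}^{\mr{sh}}}$ is a disjoint union of copies of $X_{\mc{O}^{\mr{sh}}}$. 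This is the same descent mechanism you invoke, applied one step earlier. Once normality is in hand, your argument (which is the paper's final step) finishes the proof.
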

\begin{proof}
We imitate \cite[\href{https://stacks.math.columbia.edu/tag/0BTX}{Lemma 0BTX}]{stacks-project}. Surjectivity of $\beta$ is clear: connected objects of $\Fet(\mc{O})$ are all isomorphic to finite \'etale subextensions $\mc{O} \to \mc{O}' \subset \mc{O}^{\mr{sh}}$, and $X_{\mc{O}'}$ is then connected by assumption. The composite $\beta \circ \alpha$ is trivial; note that for $\mc{O}'$ as before, $\mc{O'} \otimes_{\mc{O}} \mc{O}^{\mr{sh}} \equiv \prod_{\Gal(K'/K)} \mc{O}^{\mr{sh}}$ (with $K'= \mr{Frac}(\mc{O}')$, $K= \mr{Frac}(\mc{O})$), just as for fields. To show the image of $\alpha$ is normal, we check that for $U \to X$ finite \'etale with $U$ connected and such that $U_{\mc{O}^{\mr{sh}}} \to X_{\mc{O}^{\mr{sh}}}$ has a section $s$, $U_{\mc{O}^{\mr{sh}}}$ is a finite coproduct of copies of $X_{\mc{O}^{\mr{sh}}}$. There is an $\mc{O}'$ as above such that $s$ is defined over $\mc{O}'$, and $s(X_{\mc{O}'})$ is an open connected component of $U_{\mc{O}'}$ (being a section of an \'etale separated morphism). Now consider
\[
V'= \bigcup_{\sigma \in \Gal(K'/K)} s^{\sigma}(X_{\mc{O}'}) \subset U_{\mc{O}'}.
\]
$V'$ has a canonical $\Gal(K'/K)$-action compatible with that on $\mc{O}'$, and $\mc{O} \to \mc{O}'$ is a Galois cover with group $\Gal(K'/K)$, so by Galois descent there is an open connected component $V$ of $U$ descending $V'$. Since $U$ is connected, $V=U$, and thus $U_{\mc{O}^{\mr{sh}}}= V_{\mc{O}_{\mr{sh}}}$ is a disjoint union of copies of $X_{\mc{O}^{\mr{sh}}}$. Injectivity of $\alpha$: any finite \'etale $V \to X_{\mc{O}^{\mr{sh}}}$ arises from $V' \to X_{\mc{O}'}$ for some finite \'etale $\mc{O} \to \mc{O}'$ as above. Then $V' \to X$ is still finite \'etale, and $V' \times_{\mc{O}} \mc{O}^{\mr{sh}}$ contains $V= V' \times_{\mc{O}'} \mc{O}^{\mr{sh}}$ as an open and closed subscheme (again using the Galois property $\mc{O}' \otimes_{\mc{O}} \mc{O}^{\mr{sh}} \cong \prod_{\Gal(K'/K)} \mc{O}^{\mr{sh}})$. Finally, to show exactness in the middle it remains to check that for any finite \'etale $U \to X$ such that $U_{\mc{O}^{\mr{sh}}}$ is a finite disjoint union of copies of $X_{\mc{O}^{\mr{sh}}}$, there is a finite \'etale $S \to \Spec(\mc{O})$ and a surjection $X \times_{\mc{O}} S \to U$. We first find a finite \'etale $\mc{O} \to \mc{O}'$ as above such that $U_{\mc{O}'}= \sqcup_{i=1}^n X_{\mc{O'}}$. $S= \sqcup_{i=1}^n \Spec(\mc{O}')$ works.
\end{proof}
\begin{cor}\label{svindram}
For a Shimura datum $(G, X)$ such that $Z_G(\QQ)$ is a discrete subgroup of $Z_G(\mathbb{A}_f)$, a neat level structure $K_0$, and a basepoint $s \in \mr{Sh}(\CC)$, there is an integer $N$ such that for all $\ell$ the canonical local system
\[
\rho_{\ell} \colon \pi_1(S_{K_0, s}, \bar{s}) \to G(\QQ_{\ell})
\]
is unramified outside $N\ell$, i.e. extends to $\mc{S}_{K_0, s}[1/\ell]$ for  an integral model $\mc{S}_{K_0, s}$ over $\mc{O}_{E_{K_0, s}}[1/N]$.
\end{cor}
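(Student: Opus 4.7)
The plan is to deduce the corollary from Lemma~\ref{indram} applied to the canonical local systems on $S_{K_0, s}$. Take $F = E_{K_0, s}$, $X = S_{K_0, s}$, and note that neatness of $K_0$ together with discreteness of $Z_G(\QQ)$ in $Z_G(\mathbb{A}_f)$ confines the image of the topological canonical representation $\pi_1^{\mr{top}}(S_{K_0, s}(\CC)) \to G(\CC)$ to $G^{\mr{der}} \cdot Z$ for a fixed finite subgroup $Z \subset Z_G$. Under the real-rank-$\geq 2$ hypothesis, Margulis superrigidity applied to the arithmetic lattice image in $G^{\mr{ad}}(\RR)$ gives the $G^{\mr{ad}}$-superrigidity assumed in this section, so $\rho$ represents a class in $\mc{R}_Z$ and the framework of Lemma~\ref{tame} and Lemma~\ref{indram} is available; the remaining abelian-type, real-rank-$1$ case is contained in Kisin's work and can be cited directly.

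The only nontrivial hypothesis left to check is assumption~(2) of Lemma~\ref{indram}: the existence of a number field point $y \in X(\mc{O}_{F'}[1/N_y])$ whose specializations $\rho_{\lambda, y}$ are unramified outside $N_y\ell$, with $N_y$ independent of $\lambda$. For this I take $y$ to be any \emph{special (CM) point} on $S_{K_0, s}$. The existence of such points is basic in the theory of Shimura varieties, and after discarding finitely many primes $y$ spreads out to an $\mc{O}_{F'}[1/N_y]$-point of an integral model. By the reciprocity law underlying the theory of canonical models, the Galois action on the fiber of $\mr{Sh} \to S_{K_0, s}$ at $y$ factors through a $\QQ$-torus $T \subset G$ (a maximal torus of the Mumford-Tate group at $y$) via the $\ell$-adic realization of an algebraic Hecke character on $T$. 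These realizations form a compatible system unramified outside the primes of $F'$ dividing the conductor of the Hecke character; this set is a finite set of primes independent of $\ell$, providing the required bound $N_y$.

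The hypothesis in Lemma~\ref{indram} that the fields $F(\lambda)$ can be taken independent of $\lambda$ is automatic here, since the descents to $F$ of the canonical local systems come directly from the canonical-model structure of $\mr{Sh}/F$ rather than from an abstract superrigidity argument. The main obstacle I anticipate is the careful verification, via the reciprocity law for canonical models, that specializing the canonical local system at $y$ genuinely recovers the Hecke character on $T$, and the extraction from this of the uniform bound $N_y$; this is essentially classical but needs to be stated cleanly in the $G$-valued rather than motivic language. With that in hand, Lemma~\ref{indram} delivers the integer $N$ and the extension of $\rho_\ell$ to an arithmetic local system on $\mc{S}_{K_0, s}[1/\ell]$ over $\mc{O}_{E_{K_0, s}}[1/N]$, which is the stated conclusion.
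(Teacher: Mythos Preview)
Your proposal is correct and matches the paper's intended argument: the corollary is left without proof because it follows from Lemma~\ref{indram} once one supplies a number-field point with $\ell$-independent ramification, and a special point (via the reciprocity law for canonical models, as encoded in Lemma~\ref{abelian}) is exactly the witness the paper has in mind. Your observations that the descents $F(\lambda)=E_{K_0,s}$ are automatic from the canonical-model structure and that the $G^{\mr{ad}}$-superrigidity hypothesis is supplied by Margulis in the real-rank~$\geq 2$ case (with Kisin covering the remaining abelian-type cases) are precisely the checks the paper leaves implicit.
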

In particular, this completes the proof in the non-adjoint case of \cite[Lemma 5.1]{bakker-shankar-tsimerman:canonicalmodels}, specifically the part deduced from Theorem 5.2 of \textit{loc. cit.}.

\section{Shimura Varieties}\label{SVsection}
Set $\mc{S}= \mc{S}_{K_0, s}$, our integral model over $\mc{O}_{E_{K_0, s}}[1/N]$. Here $N$ is chosen large enough that the conclusion of Corollary \ref{svindram} holds for $N$, and that \cite[Theorem 1.3]{bakker-shankar-tsimerman:canonicalmodels} holds away from this $N$, i.e. for $v \nmid N$, $\mc{S}_{K_0} \times_{\mc{O}_E[1/N]} \mc{O}_{E_v}$ is an integral canonical model.
We have the canonical local systems
\[
\rho_{\ell} \colon \pi_1(\mc{S}[1/\ell]) \to G(\QQ_{\ell}),
\]
which we will show in this section form a compatible system. For places $\lambda \vert \ell$, $\lambda' \vert \ell'$ of $\ol{\QQ}$, we also denote $\rho_{\ell}$ (resp. $\rho_{\ell'}$) by $\rho_{\lambda}$ (resp. $\rho_{\lambda'}$) when viewed as valued in $G(\ol{\QQ}_{\lambda})$ (resp. $G(\ol{\QQ}_{\lambda'})$. Let $v$ be a prime of $E_{K_0, s}$ not dividing $N \ell \ell'$. We form $\rho_{\lambda, v}$, the restriction to the $v$-fiber. Since the monodromy groups of the $\rho_{\lambda, v}$ might not be semisimple, we need a small argument to check that Drinfeld's results (\cite{drinfeld:pross}) apply. As explained in Corollary \ref{companions}, for the results of \cite[\S 6]{drinfeld:pross} to apply to give a $\lambda'$-companion, it suffices to check that the eigenvalues of $r(\rho_{\lambda, v})(\Frob_x)$ are plain of characteristic $p$ for all $x \in |\mc{S}_{\kappa(v)}|$ and representations $r$ of the algebraic monodromy group $G_{\lambda, v} \subset G$ of $\rho_{\lambda, v}$. To check this, it suffices to check that the composite $\pi_1(\mc{S}_v) \to G/G^{\mr{der}}$ is plain of characteristic $p$ (by \cite[Proposition 4.3]{chin:indl} or Lemma \ref{piplainlem}), but this follows from the canonical model property on the zero-dimensional Shimura variety associated to $G/G^{\mr{der}}$, and in particular is a special case of Lemma \ref{abelian} below. Thus a strong $\lambda'$-adic companion $\rho_{\lambda \leadsto \lambda', v}$ exists. 

We recall the following independence-of-$\ell$ calculation for special points:

\begin{lem}[Lemma 5.5 of \cite{huryn-kedlaya-klevdal-patrikis}]\label{abelian}
Let $s= [x, a] \in \mr{Sh}(\CC)$ be a special point defined in $S_{K_0, s}$ over a number field $E(sK_0)$. Then the canonical local system $\rho_{K_0, s}$ specialized to $sK_0$ defines a compatible system of $G(\QQ_{\ell})$-representations in the following sense: for any choice of Frobenius $\Frob_v$ at a place $v \vert p$ ($p$ not dividing $N$ as above) of $E(sK_0)$, $\rho_{\ell, s}(\Frob_v) \in G(\QQ_{\ell})$ is for all $\ell \neq p$ $G(\QQ_{\ell})$-conjugate to an independent of $\ell$ value $q_{\Frob_v}^{-1} \in G(\QQ)$. When $\ell=p$, as we vary the finite-dimensional representation $\xi$ of $G$, the F-isocrystal $D_{\mr{cris}}(\xi \circ \rho_{p, s}|_{\Gamma_{\Es_v}})$ on $\Spec(\kappa(v))$ with its linearized crystalline Frobenius $\varphi^{[\kappa(v):\mathbb{F}_p]}$ defines up to conjugacy an element $\gamma_v \in G(\ol{\QQ}_p)$, and $\gamma_v= q_{\Frob_v}^{-1}$ in $[G \git G](\QQ)$.
\end{lem}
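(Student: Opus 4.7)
The plan is to use that $s$ is a special point to reduce the question to a Shimura sub-datum of torus type, and then to invoke the explicit reciprocity-law description of the canonical local systems on such varieties. By the very definition of a special point, $x \colon \bS \to G_{\RR}$ factors through $T_{\RR}$ for some $\QQ$-rational torus $T \subseteq G$; this produces a morphism of Shimura data $(T, \{x\}) \hookrightarrow (G, X)$ and, after replacing $K_0$ by $K_0 \cap T(\mathbb{A}_f)$, a morphism of Shimura varieties $\Sh(T, \{x\}) \to \Sh_{K_0}(G, X)$ whose image contains $s$. By functoriality of the canonical local systems, $\rho_{\ell, s}$ identifies with the image under $T \hookrightarrow G$ of the canonical $T(\QQ_{\ell})$-valued local system on the zero-dimensional Shimura variety $\Sh(T, \{x\})$, so it suffices to prove the lemma in this torus case.

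On a Shimura variety of torus type, the canonical $\ell$-adic local system is Deligne's reciprocity representation, given explicitly as the composition of the Artin reciprocity map with the reflex norm $r(T, \mu_x)$ of the Hodge cocharacter. Consequently, at an unramified place $v \mid p$ of $E(sK_0)$, the value $\rho_{\ell, s}(\Frob_v)$ is the $\ell$-component of the image of a local uniformizer under the reflex-norm cocharacter. The critical step is then to show that this adelic class admits a canonical $\QQ$-rational representative $q_{\Frob_v}^{-1} \in T(\QQ) \subseteq G(\QQ)$. In the classical Siegel situation this is the content of Shimura--Taniyama--Honda--Tate: for a CM abelian variety $A$ with CM by $T(\QQ)$, the Frobenius endomorphism $\pi_v \in \mr{End}(A) \otimes \QQ = T(\QQ)$ is a global algebraic integer whose image in each $T(\QQ_{\ell}) \hookrightarrow G(\QQ_{\ell})$ is exactly $\rho_{\ell, s}(\Frob_v)$. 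For a general torus sub-datum, the same conclusion follows from the reflex-norm formalism of Deligne--Milne together with the algebraicity of $\mu_x$.

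For the crystalline compatibility at $\ell = p$: since $\rho_{p, s}$ factors through $T(\QQ_p)$ and is an abelian, locally algebraic Galois representation with Hodge--Tate cocharacter $\mu_x$, it is crystalline at $v$ by Fontaine--Wintenberger; Fontaine's explicit description of $D_{\mr{cris}}$ for such abelian representations then identifies the linearized crystalline Frobenius of $D_{\mr{cris}}(\xi \circ \rho_{p, s})$ with $\xi(q_{\Frob_v}^{-1})$, giving $\gamma_v = q_{\Frob_v}^{-1}$ in $[G \git G](\QQ)$.

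The main obstacle is the extraction, in the third step above, of a canonical $\QQ$-rational representative from the a priori only adelic Frobenius image. In the Siegel case this is the full strength of Shimura--Taniyama and Honda--Tate; for general torus sub-data it is its Deligne--Milne extension. Once this is granted, the $\ell \neq p$ compatibility of the conjugacy classes is immediate from the inclusion $T(\QQ) \hookrightarrow G(\QQ) \hookrightarrow G(\QQ_{\ell})$, and the $\ell = p$ compatibility follows from standard $p$-adic comparison theorems for abelian Galois representations applied to the abelian representation $\xi \circ \rho_{p, s}$.
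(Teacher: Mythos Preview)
The paper does not give its own proof of this lemma; it is quoted verbatim as Lemma~5.5 of \cite{huryn-kedlaya-klevdal-patrikis} and used as a black box. Your outline---reduce to the torus sub-datum $(T,\{x\})$ determined by the special point, identify $\rho_{\ell,s}$ with the $\ell$-component of the reciprocity map for $(T,\{x\})$, and then invoke the Shimura--Taniyama formalism (in its Deligne--Milne/Langlands extension to arbitrary CM tori) to produce a $\QQ$-rational Frobenius $q_{\Frob_v}^{-1}\in T(\QQ)\subset G(\QQ)$---is the standard argument, and is essentially what the cited reference carries out. The crystalline step likewise reduces, as you say, to the explicit description of $D_{\mr{cris}}$ for locally algebraic abelian representations of $\Gal(\ol{\QQ}_p/E(sK_0)_v)$ valued in $T(\QQ_p)$.

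One point worth tightening: your phrase ``the $\ell$-component of the image of a local uniformizer under the reflex-norm cocharacter'' is slightly misleading as written, since that $\ell$-component is literally trivial for $\ell\neq p$. The actual mechanism is that the reciprocity map lands in $T(\mathbb{A}_f)/\overline{T(\QQ)}$, and the $\QQ$-rational element $q_{\Frob_v}^{-1}$ arises by lifting the class of $N_{\mu_x}(\pi_v^{-1})$ along $T(\QQ)\hookrightarrow T(\mathbb{A}_f)$; this lift exists precisely because of the Shimura--Taniyama input you cite. You have correctly flagged this as the substantive step.
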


To show $\rho_{\lambda \leadsto \lambda', v}$ is conjugate to $\rho_{\lambda', v}$, we will use results of \cite{bakker-shankar-tsimerman:canonicalmodels} that depend on the adjoint case of Theorem \ref{mainthm}, established in \cite{klevdal-patrikis:SVcompatiblearXiv}. We begin by reviewing these.

\begin{thm}[Theorem 1.6 of \cite{bakker-shankar-tsimerman:canonicalmodels}]\label{bst:CMlift}
Let $v$ be a prime of $E_{K_0, s}$ not dividing $N$. For any $x \in \mc{S}_{K_0}(\mathbb{F}_q)$ lying in the $\mu$-ordinary locus, there is a (canonical) special point $\tilde{x} \in \mc{S}_{K_0}(W(\mathbb{F}_q))$ lifting $x$.
\end{thm}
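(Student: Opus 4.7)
The plan is to adapt the classical Serre-Tate theory of canonical lifts of ordinary abelian varieties to the $\mu$-ordinary setting, in the generality provided by the abstract integral canonical models of \cite{bakker-shankar-tsimerman:canonicalmodels}. The rough idea: at a $\mu$-ordinary point the associated $G$-crystalline data admits a canonical splitting, which produces a preferred formal lift, which in turn one shows is a special point.

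First, I would analyze the local $p$-adic structure at $x$. The $\mu$-ordinary condition says the Newton polygon of the $G$-$F$-isocrystal $\rho_{p,v}^{\FIso^{\dagger}}|_{\kappa(x)}$ at $x$ coincides with the Hodge polygon $\mu$; in this case it admits a canonical Hodge-Newton decomposition, i.e.\ the slope filtration splits as a direct sum compatibly with the $G$-structure. This splitting picks out (up to conjugacy) a Levi subgroup $M \subset G$ through which the crystalline Frobenius factors, together with a distinguished cocharacter of $M$.

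Second, I would use the characterizing extension property of the integral canonical model $\mc{S}_{K_0}$ to produce a canonical formal lift $\hat{x} \in \mc{S}_{K_0}(\mr{Spf}\, W(\mathbb{F}_q))$. Concretely, lift the Hodge-Newton splitting to a filtered $G$-$F$-isocrystal over $W(\mathbb{F}_q)$, and then feed this into the $G$-equivariant integral $p$-adic Hodge theory/Kisin-style deformation theory attached to the crystalline local system $\rho_p$ (available in this generality by \cite{huryn-kedlaya-klevdal-patrikis}) to extract the formal lift. Algebraization of $\hat{x}$ to a genuine point $\tilde{x} \in \mc{S}_{K_0}(W(\mathbb{F}_q))$ is then automatic because $\mc{S}_{K_0}$ is of finite type. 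To verify $\tilde{x}$ is special, one argues that the canonical lift has its crystalline Frobenius contained in a maximal torus of the Levi $M$; an extension of Serre-Tate's classical observation that the canonical lift of an ordinary abelian variety is CM then forces the Mumford-Tate group of $\tilde{x}$ to be a torus.

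The main obstacle is executing step two outside of abelian type, where one does not have direct access to Shimura varieties as moduli of motives, and where ``canonical lift'' must be defined intrinsically from the $G$-crystalline data rather than from any particular moduli interpretation. The essential input making this work in the generality of arbitrary Shimura data is precisely BST's abstract characterization of integral canonical models via extension properties, combined with the $G$-equivariant companion relationship between the $p$-adic local system $\rho_p$ and the overconvergent $F$-isocrystal $\rho_{p,v}^{\FIso^{\dagger}}$ supplied by \cite{huryn-kedlaya-klevdal-patrikis}; these together allow the Hodge-Newton splitting of step one to be transported into a genuine formal lift in the sense of step two.
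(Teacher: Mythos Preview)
The paper does not give its own proof of this statement: it is quoted verbatim as Theorem~1.6 of \cite{bakker-shankar-tsimerman:canonicalmodels} and used as an external input. So there is no in-paper proof to compare against; the relevant benchmark is the argument in \cite{bakker-shankar-tsimerman:canonicalmodels}, which (as the introduction here explains) relies on their integral-model machinery together with the \emph{adjoint $\ell$-adic} compatibility from \cite{klevdal-patrikis:SVcompatiblearXiv}.

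Your outline has the right Serre--Tate shape, but there is a genuine gap in step two. The ``characterizing extension property'' of integral canonical models concerns extending morphisms from the generic fiber of a smooth $\mc{O}_{E_v}$-scheme to the model; it says nothing about lifting closed points out of the special fiber. What is actually needed is a Serre--Tate-type description of the formal completion $\widehat{\mc{S}}_{K_0,x}$ in terms of $G$-crystalline deformation data, and in non-abelian type (no moduli interpretation) this is the real content of the theorem, not a formality. Your proposal to manufacture this from the $G$-valued overconvergent $F$-isocrystal of \cite{huryn-kedlaya-klevdal-patrikis} is also logically delicate: that paper's construction of $\rho_{p,v}^{\FIso^{\dagger}}$ and its properties are downstream of the integral canonical models of \cite{bakker-shankar-tsimerman:canonicalmodels}, and the present paper in turn uses Theorem~\ref{bst:CMlift} to establish the crystalline companion statement (Corollary~\ref{crysthm}). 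You would need to verify carefully that the specific inputs you invoke are available prior to, and independently of, the CM-lifting theorem itself; as written the dependencies are at best unclear and at worst circular.

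The final step (``crystalline Frobenius in a torus of $M$ forces Mumford--Tate group to be a torus'') is also too quick in non-abelian type: without a moduli interpretation, passing from a $p$-adic statement about the $F$-isocrystal at $\tilde{x}$ to a transcendental statement about the Mumford--Tate group over $\CC$ requires an additional argument.
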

In particular, if $x$ lies in $\mc{S}(\mathbb{F}_q)$, then we obtain the lift $\tilde{x} \in \mc{S}(W(\mathbb{F}_q))$. (Recall $\mc{S}:= \mc{S}_{K_0, s}$.) Note that by choice of an isomorphism $W(\mathbb{F}_q) \subset \ol{\QQ}_p \xrightarrow[\sim]{\iota} \CC$, $\tilde{x}$ then becomes a classical special point; from \cite[Theorem 4.1]{pila-shankar-tsimerman:andre-oort} it is defined over $\ol{\ZZ}[1/N]$ and, being invariant under $\Gal(\ol{\QQ}_p/\mr{Frac}(W(\mathbb{F}_q))$ (identified via $\iota$ with a finite-index subgroup of the decomposition group of the place of $\ol{\QQ} \subset \CC$ induced by $\iota$) therefore is defined over $\mc{O}_L[1/N]$ for some number field $L$ with an unramified prime $w \vert p$ such that $\mc{O}_{L_w}= W(\mathbb{F}_q)$ inside $\ol{\QQ}_p$. In particular, for all $\lambda$ not above $p$, $\rho_{\lambda, v}(\Frob_x)$ is conjugate to $\rho_{\lambda}|_{\tilde{x}_L}(\Frob_w)$, where we write $\tilde{x}_L \in \mc{S}(\mc{O}_L[1/N])$ for the integral point underlying $\tilde{x}$ and $\rho_{\lambda}|_{\tilde{x_L}}$ for the specialization along $\tilde{x}_L$ (c.f. the specialization diagram in the proof of \cite[Theorem 3.10]{klevdal-patrikis:SVcompatiblearXiv}). Applying Lemma \ref{abelian}, we obtain:

\begin{cor}
Let $v \vert p$ be a prime of $E_{K_0, s}$ not dividing $N$. For any closed point $x \in \mc{S}_{\kappa(v)}$ belonging to the $\mu$-ordinary locus and all $\ell \neq p$, $\rho_{\ell}(\Frob_x)$ is $G(\QQ_{\ell})$-conjugate to an independent-of-$\ell$ element of $G(\QQ)$.
\end{cor}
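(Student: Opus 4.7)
The proof should essentially just combine the two ingredients that the preceding discussion has already lined up for us: the CM-lifting theorem of Bakker--Shankar--Tsimerman (Theorem \ref{bst:CMlift}) on the $\mu$-ordinary locus, and the independence-of-$\ell$ statement at special points (Lemma \ref{abelian}). The plan is to reduce a question about $\rho_{\ell}(\Frob_x)$ for $x$ a $\mu$-ordinary closed point in characteristic $p$ to the corresponding question for a Frobenius at a prime above $p$ in a number field, specialized from the characteristic-$0$ CM lift.

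Concretely, I would first let $q = \#\kappa(x)$ and apply Theorem \ref{bst:CMlift} to produce a canonical lift $\tilde{x} \in \mc{S}(W(\mathbb{F}_q))$. Choosing an embedding $W(\mathbb{F}_q) \hookrightarrow \ol{\QQ}_p \xrightarrow{\iota} \CC$, the point $\tilde{x}$ becomes a classical special point. As recalled in the paragraph immediately preceding the corollary, a combination of \cite[Theorem 4.1]{pila-shankar-tsimerman:andre-oort} and Galois-invariance under $\Gal(\ol{\QQ}_p/\mr{Frac}(W(\mathbb{F}_q)))$ shows that $\tilde{x}$ descends to a point $\tilde{x}_L \in \mc{S}(\mc{O}_L[1/N])$ for some number field $L$ with an unramified prime $w \mid p$ such that $\mc{O}_{L_w} = W(\mathbb{F}_q)$ inside $\ol{\QQ}_p$.

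The second step is the key compatibility already invoked in the excerpt: for $\ell \neq p$, the specialization of $\rho_{\ell}$ along $\tilde{x}_L$ is a representation of $\Gal(\ol{L}/L)$ that is unramified at $w$ (since $w \nmid N\ell$ and the local system extends to $\mc{S}[1/\ell]$ by Corollary \ref{svindram}), and by the commutativity of the specialization diagram (compare the proof of \cite[Theorem 3.10]{klevdal-patrikis:SVcompatiblearXiv}), $\rho_{\ell}(\Frob_x)$ is $G(\QQ_{\ell})$-conjugate to $\rho_{\ell}|_{\tilde{x}_L}(\Frob_w)$. Since $\tilde{x}$ is a special point on $S_{K_0,s}$ defined over the number field $L$, Lemma \ref{abelian} applies and gives that $\rho_{\ell}|_{\tilde{x}_L}(\Frob_w)$ is $G(\QQ_{\ell})$-conjugate to an independent-of-$\ell$ element $q_{\Frob_w}^{-1} \in G(\QQ)$. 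Chaining the two conjugacies completes the proof.

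There is no serious obstacle, since both inputs are already in place; the main point requiring some care is just verifying that the specialization of a $\mu$-ordinary point at a prime above $p$ in $L$ really does reproduce $\Frob_x$ up to $G(\QQ_{\ell})$-conjugacy, which follows because $w$ is unramified of residue field $\kappa(x) = \mathbb{F}_q$ and $\rho_{\ell}$ extends to $\mc{S}[1/\ell]$.
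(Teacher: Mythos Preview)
Your proposal is correct and follows exactly the paper's own argument: the paper's proof is precisely the paragraph preceding the Corollary (lift via Theorem \ref{bst:CMlift}, descend the special point to $\tilde{x}_L \in \mc{S}(\mc{O}_L[1/N])$ with unramified $w \mid p$, identify $\rho_{\ell}(\Frob_x)$ with $\rho_{\ell}|_{\tilde{x}_L}(\Frob_w)$ via the specialization diagram), capped off by ``Applying Lemma \ref{abelian}, we obtain:''.
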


\begin{thm}\label{mainthm}
For all finite places $\lambda$ and $\lambda'$ of $\ol{\QQ}$ and all $v$ not dividing $N \ell \ell'$, $\rho_{\lambda', v}= \rho_{\lambda \leadsto \lambda', v}$ as $G^{\mr{der}}(\ol{\QQ}_{\lambda'})$-conjugacy classes of homomorphisms $\pi_1(\mc{S}_{\kappa(v)}) \to G(\ol{\QQ}_{\lambda'})$.
\end{thm}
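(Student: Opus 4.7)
The plan is to show that $\rho_{\lambda', v}$ is itself a strong $\lambda'$-companion of $\rho_{\lambda, v}$, i.e., has the same class in $[G \git G](\ol{\QQ})$ as $\rho_{\lambda, v}$ at every Frobenius $\Frob_x$. By the uniqueness of the $\lambda'$-companion (Corollaries \ref{plainindl} and \ref{companions}) this will force $\rho_{\lambda', v}$ to be $G^{\mr{der}}$-conjugate to the canonical companion $\rho_{\lambda \leadsto \lambda', v}$.

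First, I would verify the companion property on the $\mu$-ordinary locus $U \subset \mc{S}_{\kappa(v)}$. For each closed point $x \in |U|$, Theorem \ref{bst:CMlift} produces a canonical special lift $\tilde{x} \in \mc{S}(W(\kappa(x)))$, defined over $\mc{O}_L[1/N]$ for some number field $L$ with an unramified prime $w \vert p$ satisfying $\mc{O}_{L_w} = W(\kappa(x))$. Applying Lemma \ref{abelian} to the specialization $\rho_{\ell}|_{\tilde{x}_L}$ produces an element $q_{\Frob_w}^{-1} \in G(\QQ)$ whose $[G \git G](\QQ)$-class coincides with the class of $\rho_{\ell, v}(\Frob_x)$ for every $\ell$. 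In particular the classes of $\rho_{\lambda', v}(\Frob_x)$ and $\rho_{\lambda, v}(\Frob_x)$ in $[G \git G](\ol{\QQ})$ agree for every $x \in |U|$.

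Next, I would extend this from $U$ to all of $\mc{S}_{\kappa(v)}$. Both restrictions $\rho_{\lambda, v}|_U$ and $\rho_{\lambda', v}|_U$ are plain of characteristic $p$ (by restriction of the plain property verified in the discussion just before the theorem) and agree on Frobenius conjugacy classes at all closed points of $U$; hence $\rho_{\lambda', v}|_U$ is a strong $\lambda'$-companion of $\rho_{\lambda, v}|_U$ in the sense of Corollary \ref{companions}, and is $G$-conjugate to $\rho_{\lambda \leadsto \lambda', v}|_U$ by some $g \in G(\ol{\QQ}_{\lambda'})$. Since the multiplication $G^{\mr{der}} \times Z_G^0 \to G$ is surjective on $\ol{\QQ}_{\lambda'}$-points and $Z_G^0$ acts trivially by conjugation on $G$-valued homomorphisms, we may arrange $g \in G^{\mr{der}}(\ol{\QQ}_{\lambda'})$. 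Finally, since $U$ is Zariski open and dense in the smooth connected scheme $\mc{S}_{\kappa(v)}$, the surjection $\pi_1(U) \twoheadrightarrow \pi_1(\mc{S}_{\kappa(v)})$ together with the fact that both representations factor through it promotes this identity to $\pi_1(\mc{S}_{\kappa(v)})$, yielding the claimed $G^{\mr{der}}$-conjugacy.

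The main obstacle is the passage from pointwise equality of Frobenius conjugacy classes on $|U|$ to a genuine conjugacy of the restricted representations. This is precisely the substantive content of the strong companion uniqueness of \S\ref{compsection}, via the characterization of $\hat{\Pi}_{(\lambda')}^{\mr{plain}}$ (up to inner automorphism by the identity component) through its canonical diagram \eqref{canonicalplain} of Frobenius data, combined with the fact that the $\mu$-ordinary locus, being an open dense subscheme, contains enough closed points to recover $\pi_1(\mc{S}_{\kappa(v)})$ by surjection from $\pi_1(U)$.
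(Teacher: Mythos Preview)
Your argument has a genuine gap at the step where you pass from ``$\rho_{\lambda',v}(\Frob_x)$ and $\rho_{\lambda,v}(\Frob_x)$ define the same class in $[G \git G](\ol{\QQ})$ for all $x \in |U|$'' to ``$\rho_{\lambda',v}|_U$ is a strong $\lambda'$-companion of $\rho_{\lambda,v}|_U$, hence $G$-conjugate to $\rho_{\lambda \leadsto \lambda',v}|_U$.'' What you have established is only the \emph{weak} companion property. The uniqueness in Corollary~\ref{plainindl} characterizes the isomorphism $\hat{\Pi}^{\mr{plain}}_{(\lambda)} \cong \hat{\Pi}^{\mr{plain}}_{(\lambda')}$ by its effect on the diagram $\Pi_{\Frob} \to [\hat{\Pi}^{\mr{plain}}_{(\lambda')} \git \hat{\Pi}^{\mr{plain},0}_{(\lambda')}](\ol{\QQ})$, i.e.\ Frobenius classes modulo conjugation by the \emph{identity component}. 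This is strictly finer than $[G \git G]$ (see the Remark following Corollary~\ref{companions}, which explicitly disclaims any assertion at the level of $[H \git H^0]$). The paper is careful to note that the strong companion property ``characterizes it uniquely even in the absence of the implication `local Frobenius conjugacy implies global conjugacy','' precisely because that implication can fail. So nothing in \S\ref{compsection} lets you conclude that two representations into $G$ with the same $[G \git G]$-valued Frobenius data are conjugate.

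The paper's proof supplies exactly the missing ingredient. It first invokes the previously established adjoint case (\cite[Theorem 3.10]{klevdal-patrikis:SVcompatiblearXiv}) and the abelian compatibility (Lemma~\ref{abelian}) to arrange, after a $G^{\mr{der}}$-conjugation, that $\rho_{\lambda \leadsto \lambda',v} = \rho_{\lambda',v} \cdot \chi$ for some character $\chi \colon \pi_1(\mc{S}_{\kappa(v)}) \to Z_{G^{\mr{der}}}(\ol{\QQ}_{\lambda'})$. The $\mu$-ordinary compatibility then says, for each irreducible representation $R$ of $G$, that $\tr R(\rho_{\lambda',v}) = R(\chi)\cdot \tr R(\rho_{\lambda',v})$ on a density-1 set of Frobenii; one must still rule out the possibility that $\tr R(\rho_{\lambda',v}(\Frob_x)) = 0$ on a positive-density set. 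This is handled by Rajan's strong multiplicity one theorem \cite[Theorem 3]{rajan:sm1}, which forces the trace to vanish on an entire connected component of the monodromy group, leading to a contradiction since $1$ lies in that component up to a central scalar. Your proposal omits both the reduction to the central twist and the Rajan input, and neither is recoverable from the companion formalism alone.
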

\begin{proof} 
Let $H$ be the algebraic monodromy group of $\rho_{\lambda', v}$. Our setup implies that $H$ lies between $G^{\mr{der}}$ and $G$ (but may be neither semisimple nor connected). From the known adjoint case (\cite[Theorem 3.10]{klevdal-patrikis:SVcompatiblearXiv}) and the abelian compatibility (Lemma \ref{abelian}), after suitable conjugation we may assume $\rho_{\lambda \leadsto \lambda', v}= \rho_{\lambda', v} \cdot \chi$ for some character (depending on $\lambda$, $\lambda'$, and $v$) $\chi \colon \pi_1(\mc{S}_{\kappa(v)}) \to Z_{G^{\mr{der}}}(\ol{\QQ}_{\lambda'})$. We know from Theorem \ref{bst:CMlift} that $\rho_{\lambda \leadsto \lambda', v}(\Frob_x)$ and $\rho_{\lambda', v}(\Frob_x)$ are conjugate for closed points $x$ in the Dirichlet density 1 (because Zariski open and dense: see Lemma \ref{density}) $\mu$-ordinary locus of $\mc{S}_{\kappa(v)}$. In particular, for any irreducible representation $R \colon G \to \mr{GL}_N$, by the \v{C}ebotarev density theorem (and Brauer-Nesbitt theorem), there is $g \in \mr{GL}_N(\ol{\QQ}_{\lambda'})$ such that 
\[
g(R \circ \rho_{\lambda', v}) g^{-1}= R \circ \rho_{\lambda \leadsto \lambda', v}= R \circ \rho_{\lambda', v} \cdot R\circ \chi.
\] 
In particular, $\tr(R(\rho_{\lambda', v}))= \tr(R(\rho_{\lambda', v})) \cdot R(\chi)$, where here we view $R(\chi)$ as a scalar (by Schur's lemma) in $\ol{\QQ}_{\lambda'}^\times$. Now, either $R(\chi)(\Frob_x)=1$ for a density 1 set of closed points $x$, in which case $R(\chi) =1$, or there is a positive density set of $x$ such that $\tr(R(\rho_{\lambda', v})(\Frob_x))$ equals zero. In the latter case, by \cite[Theorem 3]{rajan:sm1}, there is a connected component $\Phi$ of the algebraic monodromy group $H_R$ of $R(\rho_{\lambda', v})$ on which the trace vanishes identically.\footnote{Compare \cite[Proposition 3.4.9]{stp:variationsmemoir}. Strictly speaking, Rajan proves his Theorem for representations of $\pi_1(\mc{O}_F[1/N])$, $F$ a number field, but the same arguments will apply for integral finite type $\ZZ$-schemes, where we still have the \v{C}ebotarev density theorem.} For some $z \in Z_G$, $\Phi= R(z H^0)$ ($H$ surjects onto $H_R$, $H^0$ surjects onto $H_R^0$, and every element of $H/H^0$ is represented by an element of $Z_G$ since $H \supset G^{\mr{der}}$), and since $R(z)$ is a scalar, $\tr(\Phi)=0$ forces $\tr(R(H^0))=0$; this is clearly impossible, since $1 \in H^0$. We conclude that $R(\chi)=1$. Knowing this for all irreducible representations $R$ of $G$, we conclude that $\chi=1$.

\end{proof}
Here is the simple lemma cited in the course of the proof of Theorem \ref{mainthm}:
\begin{lem}\label{density}
Let $X$ be a scheme of finite-type over $\ZZ$. Let $U \subset X$ be a Zariski open and dense subset. Then the closed points $|U|$ have Dirichlet density 1 in $X$.
\end{lem}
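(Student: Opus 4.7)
The plan is to reduce the statement to showing that the closed subset $Z := X \setminus U$ has Dirichlet density $0$, since $U$ is assumed Zariski open and dense in $X$, hence $\dim Z < \dim X$. By additivity of the density of disjoint sets of closed points, this is enough: $\delta(|U|) + \delta(|Z|) = \delta(|X|) = 1$.

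First I would set $d = \dim X$ and recall that for any finite-type $\ZZ$-scheme $Y$ of Krull dimension $e$, the Hasse--Weil zeta function
\[
\zeta_Y(s) = \prod_{y \in |Y|} \bigl(1 - (\#\kappa(y))^{-s}\bigr)^{-1}
\]
converges absolutely for $\Re(s) > e$; this follows from Noether normalization together with the trivial bound $\#V(\mathbb{F}_q) \leq C q^{\dim V}$ for an affine variety $V/\mathbb{F}_q$ (or from the Lang--Weil estimates applied fiber by fiber), which gives a polynomial bound on the number of closed points of $Y$ with residue field of size $\leq B$.

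The Dirichlet density of a set $S$ of closed points in $X$ is defined by
\[
\delta(S) = \lim_{s \to d^+} \frac{\sum_{x \in S} (\#\kappa(x))^{-s}}{\log \tfrac{1}{s-d}},
\]
normalized so that $\delta(|X|) = 1$ (the denominator captures the pole of $\zeta_X$ at $s=d$, which exists because $X$ has at least one irreducible component of dimension $d$, to which one applies Lang--Weil on its dominant fibers). Applying the preceding paragraph to $Y = Z$, whose dimension is $\leq d - 1$, the numerator $\sum_{x \in |Z|}(\#\kappa(x))^{-s}$ converges to a finite value as $s \to d^+$, so $\delta(|Z|) = 0$, and therefore $\delta(|U|) = 1$.

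The main obstacle is really just invoking the right analytic input, namely the dimension-sensitive convergence of the zeta function; there is no genuine difficulty. One minor care point is to make sure the normalization of Dirichlet density is the one that yields $\delta(|X|)=1$ (rather than normalizing against $\log\zeta_{\Spec \ZZ}(s)$ or similar), but this is immediate from the definition. If one instead prefers the natural-density formulation $\delta(S) = \lim_{B\to\infty} \#\{x \in S:\#\kappa(x)\leq B\}/\#\{x\in|X|:\#\kappa(x)\leq B\}$, the same strategy works: the counting function for $Z$ is $O(B^{d-1})$ up to logarithmic factors, while that for $X$ is bounded below by $c\,B^d/\log B$ using a dominant component of maximal dimension.
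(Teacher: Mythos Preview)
Your proposal is correct and follows essentially the same approach as the paper: reduce to showing the complement $Z$ has density $0$ by using $\dim Z < d$ so that $\sum_{x\in|Z|}(\#\kappa(x))^{-s}$ stays bounded as $s\to d^+$, while the corresponding sum over $|X|$ diverges. The only cosmetic difference is normalization: the paper divides by the full sum $\sum_{x\in|X|}(\#\kappa(x))^{-s}$ (which it denotes $\zeta_X(s)$) rather than by $\log\tfrac{1}{s-d}$, thereby making $\delta(|X|)=1$ tautological and sidestepping any question about the exact order of the pole of $\zeta_X$ at $s=d$ when $X$ has several top-dimensional components.
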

\begin{proof}
For all such $X$, $\zeta_X(s)= \sum_{x\in |X|} (|\kappa(x)|)^{-s}$ converges absolutely and uniformly in $\Re(s)>d$ but has a pole at $s=d$ (the former statement is elementary, while the latter uses at least the Riemann hypothesis for curves over finite fields). If $U \subset X$ is open and Zariski-dense, then $Z= X \setminus U$ has dimension less than $d$, so $\zeta_Z(s)$ converges in $\Re(s)>d-1$. The density of $|U|$ is by definition 
\[
\lim_{s \to d^+} \frac{\sum_{x \in |U|} (|\kappa(x)|)^{-s}}{\zeta_X(s)}= \lim_{s \to d^+} \frac{\zeta_X(s)- \zeta_Z(s)}{\zeta_X(s)}= 1.
\]
\end{proof}

We next address the crystalline compatibility. The proof uses the same principle as the proof of Theorem \ref{mainthm}, except we need the existence of $p$-to-$\ell$ companions in our setting, as explained in \cite{huryn-kedlaya-klevdal-patrikis} in the adjoint case and in Corollary \ref{companions} (or Corollary \ref{plainindl}) in general. By \cite[Theorem 7.1]{pila-shankar-tsimerman:andre-oort}, 
for any place $v \vert p$ of $E_{K_0, s}$ not dividing $N$  the $p$-adic local system $\rho_{p, v} \colon \pi_1(\mc{S}_{(E_{K_0, s})_v}) \to G(\QQ_p)$ is crystalline. Our joint work \cite{huryn-kedlaya-klevdal-patrikis} explains how to extract from this an ($G$-) overconvergent $F$-isocrystal, packaged as a $G(\ol{\QQ}_p)$-conjugacy class of representations
\[
\rho_{p, v}^{\FIso^{\dagger}} \colon \pi_1^{\FIso^{\dagger}}(\mc{S}_v) \to G(\ol{\QQ}_p)
\]
compatible (via $D_{\mr{cris}}$) with $\rho_{p, v}$. For notational consistency, we will also for a place $\lambda$ of $\ol{\QQ}$ above $p$ write these representations as $\rho_{\lambda, v}^{\FIso^{\dagger}}$ and $\rho_{\lambda, v}$. We further know (\cite[Proposition 4.3.1]{huryn-kedlaya-klevdal-patrikis}) that the image of $\rho_{\lambda, v}^{\FIso^{\dagger}}$ in $G^{\mr{ad}}$ is Zariski-dense, and from Lemma \ref{abelian} we see that for any closed point $x \in \mc{S}_v$ that lifts to a special point, the linearized crystalline Frobenius of $\rho_{\lambda, v}^{\FIso^{\dagger}}|_{\kappa(x)}$ agrees in $[G \git G](\QQ)$ with $\rho_{\lambda', v}(\Frob_x)$ ($\lambda'$ not above $p$).\footnote{Here and in what follows, for $\lambda' \vert p$ there is also a $\lambda'$-companion, but since $\rho_{p, v}$ has $\QQ_p$-coefficients, the ``$p$-to-$p$" companion construction is just the identity.} 
This compatibility on special points also implies the compatibility of the abelianizations of $\rho_{\lambda, v}^{\FIso^{\dagger}}$ and $\rho_{\lambda', v}$. 
Combining work of Drinfeld (\cite{drinfeld:pross}), Abe (\cite{abe:crystallinecompanions}), Abe-Esnault (\cite{abe-esnault:crystallinecompanions}), and Kedlaya (\cite{kedlaya:crystallinecompanions1}), 
we have seen in Corollary \ref{companions} that $\rho_{\lambda, v}^{\FIso^{\dagger}}$ admits a $\lambda'$-companion $\rho_{\lambda \rightsquigarrow \lambda', v}$ whose algebraic monodromy group also contains $G^{\mr{der}}$.
\begin{cor}\label{crysthm}
For any place $v \vert p$ not above $N$ of $E'$, and all finite places $\lambda \vert p$ and $\lambda'$ of $\ol{\QQ}$, $\rho_{\lambda, v}^{\FIso^{\dagger}}$ and $\rho_{\lambda', v}$ ($\lambda'$ not above $p$) 
are $G$-companions.
\end{cor}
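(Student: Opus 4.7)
The plan is to adapt the argument of Theorem \ref{mainthm} nearly verbatim, with the overconvergent $F$-isocrystal $\rho_{\lambda, v}^{\FIso^{\dagger}}$ (for $\lambda \mid p$) in place of a $\lambda$-adic local system. The $\lambda'$-companion $\rho_{\lambda \rightsquigarrow \lambda', v}$ was produced in Corollary \ref{companions}, and the remark just before the present corollary records that its algebraic monodromy group contains $G^{\mr{der}}$. The same holds for $\rho_{\lambda', v}$ by the adjoint compatibility of \cite[Theorem 3.10]{klevdal-patrikis:SVcompatiblearXiv} together with Lemma \ref{abelian}. Using that adjoint compatibility, and applying Lemma \ref{abelian} to the zero-dimensional Shimura variety attached to $G/G^{\mr{der}}$ to pin down the abelianizations, one may conjugate $\rho_{\lambda \rightsquigarrow \lambda', v}$ by a suitable element of $G(\ol{\QQ}_{\lambda'})$ so that
\[
\rho_{\lambda \rightsquigarrow \lambda', v} = \rho_{\lambda', v} \cdot \chi
\]
for a character $\chi \colon \pi_1(\mc{S}_{\kappa(v)}) \to Z_{G^{\mr{der}}}(\ol{\QQ}_{\lambda'})$ depending on $\lambda$, $\lambda'$, and $v$.

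The next step is to evaluate $\chi$ on Frobenius elements at $\mu$-ordinary closed points $x \in \mc{S}_{\kappa(v)}$. By Theorem \ref{bst:CMlift}, every such $x$ lifts to a special point, and Lemma \ref{abelian} says the linearized crystalline Frobenius class of $\rho_{\lambda, v}^{\FIso^{\dagger}}|_{\kappa(x)}$ coincides with $\rho_{\lambda', v}(\Frob_x)$ in $[G \git G](\QQ)$. The strong companion property (Corollary \ref{companions}) identifies the former class with $\rho_{\lambda \rightsquigarrow \lambda', v}(\Frob_x)$ in $[G \git G](\ol{\QQ}_{\lambda'})$, so for any representation $R \colon G \to \GL_N$ the characteristic polynomials of $R(\rho_{\lambda \rightsquigarrow \lambda', v}(\Frob_x))$ and $R(\rho_{\lambda', v}(\Frob_x))$ agree. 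Since the $\mu$-ordinary locus is Zariski open and dense in $\mc{S}_{\kappa(v)}$, Lemma \ref{density} gives it Dirichlet density one.

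Finally, the Chebotarev, Brauer--Nesbitt, and Rajan endgame from Theorem \ref{mainthm} applies verbatim: for each irreducible $R$, either $R(\chi) = 1$ or $\tr(R \circ \rho_{\lambda', v})$ vanishes on a positive-density set, and \cite[Theorem 3]{rajan:sm1} then forces the vanishing on an entire connected component of the algebraic monodromy group of $R \circ \rho_{\lambda', v}$ containing the identity---a contradiction. Hence $R(\chi) = 1$ for all irreducible $R$, so $\chi = 1$. The main subtlety, rather than an outright obstacle, is the opening reduction matching the two representations modulo the center: it requires combining the adjoint-level statement from \cite{klevdal-patrikis:SVcompatiblearXiv} with the abelian compatibility of Lemma \ref{abelian}. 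Once that is in hand, the remainder of the argument is a direct transcription of the endgame of Theorem \ref{mainthm}.
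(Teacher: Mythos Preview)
Your outline follows the paper's approach closely: the paper's own proof is a two-sentence reference saying the proof of Theorem \ref{mainthm} applies verbatim once the crystalline-to-\'etale companion exists and the special-point compatibility (Lemma \ref{abelian}) is in hand. Your expansion of that verbatim transcription is accurate.

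There is, however, one substantive mis-citation. To write $\rho_{\lambda \rightsquigarrow \lambda', v} = \rho_{\lambda', v} \cdot \chi$ with $\chi$ valued in $Z_{G^{\mr{der}}}$, you need to know that $(\rho_{\lambda \rightsquigarrow \lambda', v})^{\mr{ad}}$ is $G^{\mr{ad}}(\ol{\QQ}_{\lambda'})$-conjugate to $\rho_{\lambda', v}^{\mr{ad}}$. Since $\rho_{\lambda \rightsquigarrow \lambda', v}$ is the $\lambda'$-companion of the $F$-isocrystal $\rho_{\lambda, v}^{\FIso^{\dagger}}$, this amounts to the statement that $(\rho_{\lambda, v}^{\FIso^{\dagger}})^{\mr{ad}}$ and $\rho_{\lambda', v}^{\mr{ad}}$ are $G^{\mr{ad}}$-companions. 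That is a crystalline-to-\'etale comparison ($\lambda \mid p$, $\lambda' \nmid p$), and it is \emph{not} contained in \cite[Theorem 3.10]{klevdal-patrikis:SVcompatiblearXiv}, which only compares $\ell$-adic local systems for two places both away from $p$. The adjoint crystalline compatibility you need is precisely the main result of \cite{huryn-kedlaya-klevdal-patrikis}; the paper's proof flags this as its first sentence. With that reference corrected, your argument is complete and matches the paper's.
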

\begin{proof}
The adjoint case is the main result of \cite{huryn-kedlaya-klevdal-patrikis}. By the observations in the paragraph preceding the Corollary (existence of the crystalline-\'etale companions on $\mc{S}_v$ and the previously-known, by Lemma \ref{abelian}, compatibility of $\rho_{\lambda, v}^{\FIso^{\dagger}}$ and $\rho_{\lambda', v}$ on special points), the proof of Theorem \ref{mainthm} now applies verbatim. 
\end{proof}
We conclude with a refinement of Corollary \ref{crysthm} that supplies one of the desiderata in the production of Kottwitz-triples (see the introduction). Let $v \vert p$ be a place not above $N$ of $E'$, and let $x \in \mc{S}_v$ be a closed point. By construction, for all $\lambda'$ not above $p$, $\rho_{\lambda'}(\Frob_x)$ lies in $G(\QQ_{\ell'})$, is semisimple by \cite[Theorem 7.1]{bakker-shankar-tsimerman:canonicalmodels}, and we have seen this is an independent of $\lambda'$ conjugacy class in $[G \git G](\QQ)$. For $\lambda \vert p$, let us write $\rho_{\lambda, v}^{\FIso^{\dagger}}(\Frob_x) \in G(\ol{\QQ}_p)$ for the linearized crystalline Frobenius of $\rho_{\lambda, v}^{\FIso^{\dagger}}|_{\kappa(x)}$. Corollary \ref{crysthm} tells us the semisimple part $\gamma_x$ of $\rho_{\lambda, v}^{\FIso^{\dagger}}(\Frob_x) \in G(\ol{\QQ}_p)$ is $G(\ol{\QQ}_p)$-conjugate to an element of $G(\ol{\QQ})$ whose conjugacy class is $\QQ$-rational (induced by $\rho_{\lambda'}(\Frob_x)$). Note that we only at present know that $\rho^{\FIso^{\dagger}}_{\lambda, v}(\Frob_x)$ is semisimple for $x$ in the $\mu$-ordinary locus, since $x$ then lifts to a special point in characteristic zero.
\begin{cor}
Let $K_{x}= \mr{Frac}(W(\kappa(x)))$. There is an element $\delta \in G(K_x)$, defined up to $\sigma$-conjugacy by elements of $G(W(\kappa(x)))$, such that the $G(\ol{\QQ}_p)$-conjugacy class of $\rho_{\lambda, v}^{\FIso^{\dagger}}(\Frob_x)$ equals that of $\delta \sigma(\delta) \cdots \sigma^{d_x-1}(\delta)$, where $d_x= [\kappa(x):\mathbb{F}_p]$.
\end{cor}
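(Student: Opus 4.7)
The plan is to unpack the $G$-isocrystal structure at the closed point $x$ and identify $\delta$ with the absolute Frobenius operator on $\rho_{\lambda, v}^{\FIso^{\dagger}}|_{\kappa(x)}$ in a chosen trivialization, deriving the required integrality from Kisin's integral $p$-adic Hodge theory, as flagged in the introduction.

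First, I would recall that $\rho_{\lambda, v}^{\FIso^{\dagger}}|_{\kappa(x)}$ is (Tannakianly) a $G$-isocrystal on $\Spec(\kappa(x))$, i.e.\ a pair $(P, \phi)$ consisting of a $G$-torsor $P$ over $K_x$ together with a $\sigma$-semilinear $G$-equivariant automorphism $\phi \colon P \to P$, where $\sigma$ is the Frobenius of $W(\kappa(x))$. After picking any trivialization $P \simeq G_{K_x}$, the operator $\phi$ becomes $g \mapsto \delta \cdot \sigma(g)$ for some $\delta \in G(K_x)$; a different trivialization differing by $h \in G(K_x)$ replaces $\delta$ by $h \delta \sigma(h)^{-1}$. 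Iterating $d_x$ times yields the $K_x$-linear operator of left multiplication by $\delta \sigma(\delta) \cdots \sigma^{d_x-1}(\delta)$, which is by the definitions used throughout the paper the $G(\ol{\QQ}_p)$-conjugacy class $\rho_{\lambda, v}^{\FIso^{\dagger}}(\Frob_x)$.

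Next, to sharpen the ambiguity in $\delta$ from $\sigma$-conjugacy by $G(K_x)$ to $\sigma$-conjugacy by $G(W(\kappa(x)))$, we must produce a $\phi$-compatible integral $G$-torsor $P^{\circ}$ over $W(\kappa(x))$ with $P^{\circ} \otimes_{W(\kappa(x))} K_x \cong P$. This integral structure is furnished by Kisin's integral $p$-adic Hodge theory (\cite[\S 2]{kisin:intmodabelian}) applied Tannakianly to the crystalline $G$-representation $\rho_p$ near $x$: for each finite-dimensional representation of $G$, Kisin's theory provides a canonical $W(\kappa(x))$-lattice in the underlying crystalline filtered $\phi$-module, and because these lattices are functorial in the representation, they assemble into a $G$-torsor over $W(\kappa(x))$ compatibly with the Frobenius action on $P$. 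Once an integral trivialization of $P^{\circ}$ is chosen, $\delta$ is pinned down up to $\sigma$-conjugation by $G(W(\kappa(x)))$ alone.

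The main obstacle will be verifying that the Tannakian assembly of Kisin's lattices goes through cleanly in the $G$-valued setting for the crystalline local systems $\rho_p$ on $\mc{S}_v$ and descends compatibly to the closed point $x$; in particular one needs the integral structure to be independent of the auxiliary representation. Given the compatibility checks already used in the adjoint case of \cite{huryn-kedlaya-klevdal-patrikis} and the functoriality built into Kisin's construction, I expect this to reduce to a routine verification. Once in place, the identity $\rho_{\lambda, v}^{\FIso^{\dagger}}(\Frob_x) = \delta \sigma(\delta) \cdots \sigma^{d_x-1}(\delta)$ in $[G \git G](\ol{\QQ}_p)$ holds tautologically, and the corollary follows.
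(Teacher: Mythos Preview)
Your proposal is correct and follows essentially the same route as the paper: both extract $\delta$ from the absolute Frobenius on the $G$-isocrystal at $x$, and both obtain the integral refinement (well-definedness up to $\sigma$-conjugacy by $G(W(\kappa(x)))$) from Kisin's integral $p$-adic Hodge theory applied to the crystalline local system $\rho_p$.

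The paper's execution is slightly more concrete than your Tannakian assembly. Rather than assembling Kisin's lattices over all representations of $G$, it fixes a single faithful representation $L$ of a reductive $\ZZ_p$-model $G_{\ZZ_p}$ together with tensors $(s_\alpha) \subset L^{\otimes}$ cutting out $G_{\ZZ_p}$ (\cite[Proposition 1.3.2]{kisin:intmodabelian}), applies Kisin's functor $\mathfrak{M}$ over $\mathfrak{S}=W(\kappa(x))[[u]]$, and then invokes \cite[Corollary 1.3.5]{kisin:intmodabelian} to show the transported tensors $\tilde{s}_\alpha$ define a torsor for $G_{\ZZ_p}\times_{\ZZ_p}\mathfrak{S}$ that is trivial over $\mathfrak{S}$; specializing at $u=0$ then gives the $W(\kappa(x))$-integral trivialization. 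This is exactly the standard device for making your ``Tannakian assembly goes through cleanly'' step precise, and the citation to Corollary 1.3.5 is where the real content lies. One small point you omit: to have a crystalline Galois representation to which Kisin's theory applies, the paper first chooses a lift $\tilde{x}\in\mc{S}(W(\kappa(x)))$ of $x$, noting that the resulting $F$-isocrystal on $\kappa(x)$ is independent of this choice.
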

\begin{proof}
Recall that $K_p= G_{\ZZ_p}(\ZZ_p)$ is a hyperspecial level structure, where we write $G_{\ZZ_p}$ for an extension of $G$ to a reductive model over $\ZZ_p$. Choose a faithful representation (over $\ZZ_p$) $L$ of $G_{\ZZ_p}$ and set $V= L \otimes_{\ZZ_p} \QQ_p$. Any choice of lift $\tilde{x} \in \mc{S}(W(\kappa(x)))$ induces on $L$ the structure of a crystalline $\pi_1(K_x)$-representation, with underlying $F$-isocrystal independent of the choice of lift $\tilde{x}$. By \cite[Proposition 1.3.2]{kisin:intmodabelian}, there is a finite set of tensors $(s_{\alpha}) \subset L^\otimes$ (the direct sum of all $\ZZ_p$-modules formed from $L$ using duals, tensor products, and symmetric and exterior powers) such that $G_{\ZZ_p}$ is the subgroup of $\mr{GL}(L)$ fixing (pointwise) all $s_{\alpha}$. Let $\mathfrak{M}$ be Kisin's fully faithful tensor functor (\cite{kisin:crystalline}, \cite[\S 1.2]{kisin:intmodabelian})
\[
\mathfrak{M} \colon \Rep_{\pi_1(K_x)}^{\mr{cris}, \circ} \to \mr{Mod}^{\varphi}_{/\mathfrak{S}}
\] 
from lattices in crystalline representations of $\pi_1(K_x)$\footnote{Note Kisin's theory works with ramified ground fields as well.} to finite free $\mathfrak{S}=W(\kappa(x))[[u]]$-modules equipped with an isomorphism $\varphi^*(\mathfrak{M})[1/E(u)] \xrightarrow{\sim} \mathfrak{M}[1/E(u)]$; here $E(u)$ is an Eisenstein polynomial defining a fixed uniformizer of $K_x$, so we can just take $E(u)=u-p$.

The tensors $s_{\alpha} \in L^\otimes$ transport via $\mathfrak{M}$ to tensors $(\tilde{s}_{\alpha}) \subset \mathfrak{M}(L)^{\otimes}$; Galois-invariance of the $s_{\alpha}$ yields $\varphi$-invariance of the $\tilde{s}_{\alpha}$, and then \cite[Corollary 1.3.5]{kisin:intmodabelian} shows that the $(\tilde{s}_{\alpha})$ define a reductive group $G_{\mathfrak{M}} \subset \mr{GL}(\mathfrak{M}(L))$ isomorphic to $G_{\ZZ_p} \times_{\ZZ_p} \mathfrak{S}$. More precisely, the isomorphism arises from trivializing the $G_{\ZZ_p} \times_{\ZZ_p} \mathfrak{S}$-torsor $\mc{P}:= \underline{\mr{Isom}}_{\mathfrak{S}}(L \otimes_{\ZZ_p} \mathfrak{S}, \mathfrak{M}(L))$ of linear isomorphisms mapping $s_{\alpha}$ to $\tilde{s}_{\alpha}$. 
Specializing along $u \mapsto 0$, we obtain tensors $\tilde{s}_{\alpha, 0} \in (\mathfrak{M}(L)/u\mathfrak{M}(L))^{\otimes}$ and a corresponding trivialization.

Thus for a choice $\gamma$ of trivialization over $\mathfrak{S}/u\mathfrak{S}=W(\kappa(x))$, we have the $\sigma$-linear injection
\[
\Phi \colon L \otimes_{\ZZ_p} W(\kappa(x)) \xrightarrow[\sim]{\gamma} \mathfrak{M}(L)/u\mathfrak{M}(L) \xrightarrow{\varphi} \mathfrak{M}(L)/u\mathfrak{M}(L)[1/p] \xrightarrow[\sim]{\gamma^{-1}} L \otimes_{\ZZ_p} K_x
\]
that carries $s_{\alpha}$ to $s_{\alpha}$, since the tensors $\tilde{s}_{\alpha}$ and thus $\tilde{s}_{\alpha, 0}$ are $\varphi$-invariant.  The composite $\Phi$ above can therefore be uniquely expressed as $b \sigma$ for some $b \in G(K_x)$. Changing the choice of trivialization of $\mc{P} \pmod{u}$ amounts to replacing $\gamma$ by $\gamma s$, $s \in G(W(\kappa(x)))$, which changes $\Phi$ to $s^{-1}\Phi s$ and changes $b$ to $s^{-1}b \sigma(s)$. Finally, $\Phi^{d_x}=b \sigma(b) \cdots \sigma^{d_x-1}(b)$ is an element of $G(K_x)$ in the same $G(\ol{\QQ}_p)$-conjugacy class as $\rho^{\FIso^{\dagger}}_{\lambda, v}(\Frob_x)$, since $\mathfrak{M}(L)/u\mathfrak{M}(L)[1/p]$ is canonically $D_{\mr{cris}}(V)$ as $\varphi$-isocrystals (\cite[Theorem 1.2.1]{kisin:intmodabelian}).
\end{proof}

\bibliographystyle{amsalpha}
\bibliography{biblio.bib}

\end{document}